\def\AA{\mathbb{A}}
\def\CC{\mathbb{C}}
\def\QQ{\mathbb{Q}}
\def\ZZ{\mathbb{Z}}
\def \lra{\longrightarrow}
\def \a{\alpha}
\def \z {\zeta}
\def \A {\mathcal{A}}
\def \g {\gamma}
\def \w {\omega}
\def \LL {\mathcal{L}}
\def \W {\mathcal{W}}
\def \S {\mathcal{S}}
\def \s {\sigma}
\def \OO {\mathcal{O}}
\def \s {\sigma}
\newcommand{\matrd}[4]{\begin{pmatrix}#1&#2\\#3&#4\end{pmatrix}}
\newcommand{\smallmatrd}[4]{\left(\begin{smallmatrix}#1&#2\\#3&#4\end{smallmatrix}\right)}
\def \un {\underline}
\def \ov{\overline}
\def \tors {\text{tors}}
\def \wh{\widehat}
\def\JJ{\mathfrak{J}}
\def \nn {\mathfrak{n}}
\def \k {\kappa}
\def \Norm {N_{F/\QQ}}
\def\ps{\mathfrak{p}}
\DeclareMathOperator{\Sym}{Sym}
\DeclareMathOperator{\val}{val}
\DeclareMathOperator{\NP}{NP}
\DeclareMathOperator{\Gal}{Gal}
\theoremstyle{plain}
\newtheorem{thm}{Theorem}[section]
\newtheorem*{thm*}{Theorem}
\newtheorem{lem}[thm]{Lemma}
\newtheorem{cor}[thm]{Corollary}
\newtheorem*{con*}{Conjecture}
\newtheorem{prop}[thm]{Proposition}
\newtheorem*{prop*}{Proposition}
\theoremstyle{definition}
\newtheorem{defn}[thm]{Definition}
\newtheorem{exmp}[thm]{Example}
\theoremstyle{definition}
\newtheorem{rmrk}[thm]{Remark}
\newtheorem*{rmrk*}{Remark}
\newtheorem*{exmp*}{Example}
\newtheorem*{obs*}{Observation}
\newtheorem{nota}[thm]{Notation}
\numberwithin{equation}{section}
\title{$2$-adic slopes of Hilbert modular forms over $\QQ(\sqrt{5})$}
\author{Christopher Birkbeck}	
\address{\newline Department of Mathematics, University College London, Gower Street, London, WC1E 6BT \newline Email: c.birkbeck@ucl.ac.uk \newline ORCID ID: orcid.org/0000-0002-7546-9028 }
\begin{document}

\maketitle

\begin{abstract}
	We show that for arithmetic weights with a fixed finite order character, the slopes of $U_p$ for $p=2$ (which is inert) acting on overconvergent Hilbert modular forms of level $U_0(4)$ are independent of the (algebraic part of the) weight and can be obtained by a simple recipe from the classical slopes in parallel weight $3$. 
\end{abstract}

\section{Introduction}

Modular forms and more generally overconvergent modular forms of level divisible by $p$ (a prime number) are acted upon by a Hecke operator denoted $U_p$. This is a compact operator on the spaces of overconvergent modular forms and therefore it makes sense to study its eigenvalues. One of the aspects one can study is the $p$-adic valuation of the eigenvalues; these are known as the slopes, since they can be computed from the Newton polygon of the characteristic power series of $U_p$. One of the aspects highlighted by the work of Gouvea--Mazur and Hida (among others) was the degree to which the slopes depend on the weight of the space of modular forms. For classical elliptic modular forms a weight is simply a positive integer, but for overconvergent modular forms one can allow more general weights given by continuous homomorphisms $\kappa:\ZZ_p^\times \to \CC_p^\times$. To these weights one can associate a rigid analytic space $\W$ known as the weight space which one can check is simply a finite union of open discs. The weight space can be divided into two regions as follows: let $q=p$ if $p$ is odd and $q=4$ if $p=2$ and let $\gamma$ be a topological generator of $1+q\ZZ_p$. For $\kappa$ a weight we define $w(\k)=\k(\gamma)-1$. Then we say $\k$ is in the centre of weight space if $\val_p(\w(\k)) \geq 1$ for $p$ odd and $\geq 3$ for $p=2$. We say a weight is near the boundary if $\val_p(\w(\k)) \leq \frac{1}{p-1}$ for $p$ odd and $\val_p(\w(\k)) < 3$ for $p=2$.

 Buzzard and Kilford \cite{BK} studied slopes of overconvergent (elliptic) modular forms in the special case that the tame level is $1$ and $p=2$. In this case they proved that, for weights near the boundary of weight space, slopes have a great deal of structure, in particular they are in arithmetic progression and scale to zero as the weight moves further towards the boundary of weight space. From this one can deduce that, over the boundary annulus, the eigencurve is a disjoint union of annuli. This example motivated further work in the area (see \cite{roe,slbd}) and most notably Liu--Wan--Xiao in \cite{eovb} showed that the slopes of quaternionic modular forms (over $\QQ$) have structure analogous to what was observed by Buzzard--Kilford. Specifically, they showed that the slopes (for weights near the boundary) are given as a union of arithmetic progressions with common difference and they scale as the weight approaches the boundary. From this  they then deduce that, over the boundary annulus, the eigenvarieties are a disjoint union of rigid spaces which are flat over this annulus. Then, via Chenevier's overconvergent Jacquet--Langlands correspondence, one deduces similar results for a large class of spaces of overconvergent modular forms. This in turn can be used to prove parity conjectures of Selmer ranks for modular forms.


In the setting of Hilbert modular forms, much less is known. In this case, weights have several components and therefore the boundary is more complicated as one can approach it via each of the components separately. Computations in \cite{ME} suggest that, again for weights near the boundary (see \cite[Definition 2.1.8]{ME} for a precise definition), slopes behave in a highly predictable way but in general are not a union of arithmetic progressions with common difference. In this more general setting one observes that the multiset of slopes appears to be completely determined by a simple recipe starting with slopes of classical Hilbert modular forms of some small weight. Similar behaviour is also present for elliptic modular forms, yet due to their simpler nature, in this case, the above recipe coincidentally gives unions of arithmetic progressions with common difference. This combined with the scaling behaviour of the slopes gives the above statements about the geometry of the associated eigenvarieties. 

In the Hilbert setting, one can also consider the effect of the splitting behaviour of $p$.  If $p=\prod_{i} \ps_i$ then $U_p=\prod_i U_{\ps_i}$ and by making certain restrictions on the weight space\footnote{Specifically, fixing certain components of the weights.} one can study the slopes of $U_{\ps_i}$. If $p$ is totally split, work of Newton--Johansson \cite{NewtJoh}, shows that the methods of \cite{eovb} can be used to describe the slopes of $U_{\ps_i}$, which for general $p$ is not the case.\footnote{This is due to the fact that the associated Newton and Hodge polygons in general only touch at the base point (see \ref{rmrk:HN}).} Moreover, they construct partial eigenvarieties and prove that over a boundary annulus these partial eigenvarieties decompose as a union of components which are finite over weight space. From this they prove the parity part of the Bloch--Kato conjecture for Galois representations associated to Hilbert modular forms (still with $p$ totally split). 

In general, the structure of the slopes of $U_p$ in the Hilbert case is unknown. In this note, we show that for an inert prime ($p=2$) and for arithmetic weights with a \textit{fixed} finite order character, the slopes of $U_2$ acting on spaces of overconvergent Hilbert modular forms of level $U_0(4)$ (over $\QQ(\sqrt{5})$) are completely determined by the slopes of classical Hilbert modular forms of parallel weight $3$ with a suitable finite order character. We also give computational evidence that as weights approach the boundary of weight space, the slopes scale and tend to zero (similar to what is seen for elliptic modular forms).  Specifically, we prove the following result:

\begin{thm*}
	The slopes of\/ $U_2$ acting in weight $[n_1,n_2]\chi$ on overconvergent Hilbert modular forms of level $U_0(4)$ is independent of $n_i$. Moreover, they are completely determined by the slopes of\/ $U_2$ acting on the classical spaces of Hilbert modular forms of level $U_0(4)$, parallel weight $3$ with nebentypus $\chi \tau^n$ for a $\tau$ a certain character of order $6$. 
\end{thm*}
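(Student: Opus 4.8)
The plan is to relate the action of $U_2$ in weight $[n_1,n_2]\chi$ to its action in parallel weight $[3,3]$ with a suitable nebentypus, by means of multiplication by Hasse-type invariants, and then to extract the classical part via a classicality theorem. Write $M_{\k}^{\dagger}$ for the orthonormalizable Banach module of overconvergent Hilbert cusp forms of level $U_0(4)$ over $F=\QQ(\sqrt5)$ and weight $\k$, let $U_2$ be the usual compact Hecke operator, and set $P_{\k}(T)=\det(1-T\,U_2\mid M_{\k}^{\dagger})$, so that the multiset of slopes in weight $\k$ is that of the Newton polygon of $P_{\k}$. Since $2$ is inert, $(\OO_F\otimes\ZZ_2)^{\times}$ is the unit group of the unramified quadratic extension of $\QQ_2$ and hence isomorphic to $\Delta\times\ZZ_2^{2}$, where $\Delta$, cyclic of order $6$, is generated by the Teichm\"uller lift of $\FF_4^{\times}$ together with $-1$; fix $\tau$ a generator of the character group of $\Delta$. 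The two embeddings $F\hookrightarrow\QQ_4$ restrict to $\Delta$ as the identity and as $x\mapsto x^{2}$ on $\mu_3$ (and trivially on $\pm1$), so a direct computation identifies the restriction of the algebraic weight $[n_1,n_2]$ to $\Delta$ with $\tau^{n}$ for an explicit linear form $n=n(n_1,n_2)$ modulo $6$; this is the origin of the nebentypus $\chi\tau^{n}$ in the statement, and it already reduces the problem, up to the ``wild'' part of the weight, to a statement depending on $(n_1,n_2)$ only through $n\bmod 6$.

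The engine of the reduction is the family of Hasse invariants at the inert prime: the total Hasse invariant, a section of the weight-$[1,1]$ automorphic bundle (whose restriction to $\Delta$ is trivial), together with the partial Hasse invariants, whose restrictions to $\Delta$ and whose interplay with the twisting operations available on classical forms account for the torsion directions. Each such invariant becomes invertible on an appropriate overconvergent tube around the $\mathfrak{p}$-ordinary locus ($\mathfrak{p}=(2)$), and multiplication by a suitable monomial in them yields a $U_2$-equivariant map, up to the usual Hecke normalization, relating $M_{[n_1,n_2]\chi}^{\dagger}$ to $M_{[3,3]\,\chi\tau^{n}}^{\dagger}$; iterating, and using a classicality theorem of Coleman type to identify, degree by degree, the cokernels of these multiplications with spaces of classical Hilbert modular forms, one expresses $P_{[n_1,n_2]\chi}(T)$ through the characteristic polynomials of $U_2$ on the classical members of the Hasse tower over parallel weight $3$ --- concretely, as an explicit union of shifted copies of the classical weight-$3$ slope multiset with nebentypus $\chi\tau^{n}$. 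The step I expect to be the main obstacle is the genuine $U_2$-equivariance of these multiplication maps on the correct tubes: at an inert prime the canonical-subgroup theory, hence the comparison of the two automorphic line bundles along the Hecke correspondence, is more delicate, and one must check both that the relevant Newton and Hodge polygons of $U_2$ behave well enough in this case (the phenomenon recorded in \ref{rmrk:HN} must be shown not to intervene) and, crucially, that for \emph{integral} weights $[n_1,n_2]$ the Hecke normalization contributes no net power of $2$ --- it is this last point that upgrades a congruence of characteristic power series to an equality, and thereby yields independence of the algebraic part of the weight rather than merely of its residue modulo $6$.

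Granting the above, $P_{[n_1,n_2]\chi}(T)$ is determined by the classical $U_2$-action on Hilbert modular forms of level $U_0(4)$, parallel weight $3$ and nebentypus $\chi\tau^{n}$, via the fixed combinatorial recipe coming from the Hasse tower; since this recipe is applied to a finite-dimensional input, the slope multiset it produces is visibly unchanged as $(n_1,n_2)$ varies with $n\bmod 6$ fixed, and a final check --- using the explicit description of the classical parallel-weight-$3$ Hecke modules over $\QQ(\sqrt5)$ and the relations among their nebentypes (in particular the $\tau$-twists) --- shows that it is in fact the same for all six values of $n\bmod 6$, which completes the proof. The computational evidence for the scaling of slopes as one approaches the boundary of weight space is then a separate matter, not required for the theorem itself.
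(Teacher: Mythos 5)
Your proposal takes a genuinely different route from the paper, but it has a gap at exactly the point you flag as ``the main obstacle,'' and that gap is fatal rather than technical. The engine of your argument is a $U_2$-equivariant (up to normalization) multiplication map, built from total and partial Hasse invariants, relating $M^{\dagger}_{[n_1,n_2]\chi}$ to $M^{\dagger}_{[3,3]\chi\tau^{n}}$. No such equivariant comparison exists: multiplication by a lift of a Hasse invariant changes the weight but does not commute with $U_p$ (one only gets congruences modulo $p$, which is why Coleman-style arguments produce $p$-adic \emph{families} and analytic variation of the characteristic series, never equality of slope multisets across weights). Indeed, if your mechanism worked it would prove slope-independence of the algebraic weight everywhere in weight space, which is false (Gouvea--Mazur-type variation in the centre); the theorem here is a boundary phenomenon, forced by the fixed conductor-$4$ character at $p=2$, and your argument makes no essential use of being near the boundary --- a strong sign it cannot close. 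You also correctly note that the Newton/Hodge comparison underlying \cite{eovb} must ``not intervene,'' but as recorded in Remark \ref{rmrk:HN} the Hodge polygon gives only trivial bounds here, so there is no classicality-plus-polygon shortcut to fall back on; the classicality step and the identification of ``cokernels'' with classical spaces are asserted, not proved, and they are where all the content lies.

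Beyond the equivariance issue, the combinatorial recipe you invoke is also not derived: the actual shifts and index set come from the quaternionic model (via the Jacquet--Langlands isomorphism of \cite{me2}), where the invariance under $\Gamma^t$ (cyclic of order $3$, generated at $p$ by $\mathrm{diag}(\zeta,\zeta^{-1})$) cuts out the monomial basis $X^{i_1}Y^{i_2}$ with $i_1-i_2\equiv n_2-n_1 \bmod 3$, the operator $U_p$ is an explicit sum of four terms (Proposition \ref{prop1}), and the valuations of the matrix entries (Propositions \ref{cor714}, \ref{ent}, \ref{prop8}) show the Newton polygon is computed by the $2\times 2$ block diagonal (Lemma \ref{key lemma}); the slopes $a+b+1,\,a+b+3$ or $a+b+2$ of the blocks, independent of $n_1,n_2$, are what produce Theorem \ref{thm:main}. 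Note also that the correct recipe mixes \emph{both} nebentypen $\chi$ and $\chi\tau^{3}$ according to the parity of the indices $(a,b)$ (the sets $\S([1,1]\chi)$ and $\S([1,1]\chi\tau^{3})$ in Table 1 are different, namely $[2,2]$ versus $[1,1],[3,1]$), so your final claim that a single nebentypus $\chi\tau^{n}$ suffices and that the output is identical for all six values of $n\bmod 6$ does not match the true statement. To repair your approach you would need either a genuinely new equivariance statement at an inert prime (which I do not believe is available) or to abandon the geometric reduction and, as the paper does, control the Newton polygon directly from the valuations of the $U_p$ matrix.
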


This theorem is in line with  \cite[Conjecture 4.7.9]{ME}, appropriately generalized to this situation where the level is not \textit{sufficiently small}. The  proof is essentially an extended exercise on $p$-adic matrix analysis which we do not know how to generalize to many other situations. Specifically, it relies on the fact that, for this example, knowledge of the $p$-adic valuations of the entries of the matrix associated to $U_p$ completely determine the slopes of its eigenvalues, which is not always the case.\footnote{As one can check in the examples computed in \cite{ME}.} Yet we note that the methods used here are of a different nature than those in \cite{eovb}, which rely on studying the associated Hodge polygons, which in this example give only trivial bounds on the associated Newton polygon.

The above theorem together with the computational evidence suggest that, in this example, the slopes tend to zero as one approaches the boundary\footnote{In these computations we approach the boundary  in all components of the weight space at once.} of weight space. In particular, this would imply, via similar methods to those in \cite{NewtJoh}, that over the boundary annulus the corresponding eigenvariety is a disjoint union of rigid spaces which are finite flat over this boundary annulus. Moreover, we expect that methods similar to those of \cite{NewtJoh}, would also prove the parity part of the Bloch--Kato conjecture for this specific example (if the scaling behaviour of the slopes were proven).

\subsection*{Acknowledgements}
This paper owes it existence to very useful conversations with Kevin Buzzard, for which I am very grateful. I would also like to thank the anonymous referee for their useful comments.  The work was done while the author was an EPSRC Doctoral Prize Fellow at UCL.

\section{Background}

Let us begin with the relevant set-up. Throughout we will set $p=2$. We want to study the $p$-adic slopes of overconvergent Hilbert modular forms over $F=\QQ(\sqrt{5})$. We note that $2$ is inert in $F$ and that $F$ has narrow class number one which will simplify the computations.  As is usual, instead of working directly with the space of overconvergent Hilbert modular forms, we will instead work with spaces of overconvergent quaternionic modular forms as the geometry is simpler. Specifically, by \cite[Theorem 1]{me2}, if we let $D$ be the unique quaternion algebra over $F$ ramifying only at the two infinite places of $F$, then the eigenvariety associated to Hilbert modular forms (as defined by \cite{AIP})  is isomorphic to the eigenvariety associated to these quaternionic forms. Therefore, since we are only interested in slopes, there is no loss in working with overconvergent quaternionic forms. We begin by recalling their definition.

\begin{nota}Throughout, $D$ will be the totally definite quaternion algebra, which we note has class number one and we let $\OO_D$ denote a fixed maximal order. Specifically, we let $\OO_D$ be the icosian ring, since any other maximal order is conjugate to this (see \cite[Section 3]{Dem}). Moreover, we fix a splitting of $D$ at $p$, let $\wh{\OO}_D=\OO_D \otimes \wh{\ZZ}$ and set $U_0(p^s):= \left \{\g \in \wh{\OO}_D ^\times : \g \equiv \left ( \begin{smallmatrix} *&*\\ 0&*  \end{smallmatrix} \right) \bmod p^s \right \}$. 	
\end{nota}

\begin{defn}
Let $n \in \ZZ^{2}$ and $v \in \ZZ^{2}$ such that $n+2v=(r,r)$ for some $r \in \ZZ$. Set $k=n+2$ and $w=v+n+1$ (understood as the obvious component-wise sum).	 It follows from the above that all the entries of $k$ have the same parity and $k=2w-r$. Moreover,  note that given $k$ (with all entries paritious and greater than $2$) and $r$ we can recover $n,v,w$. We call the $5$-tuple $(k,r,n,v,w)$ a weight tuple and, in order to simplify notation, we denote it simply by $[n_1,n_2]$, where it is understood that $k,r,v,w$ are  implicit.
\end{defn}

\begin{defn}
	Let $K$ denote the unramified extension of $\QQ_{p}$ of degree $2$ and let $\OO_K$ denote its ring of integers. Note that $\OO_K \cong \OO_F \otimes \ZZ_{p}$.  For $\chi$ a finite character on $\OO_F$ we consider it as a character on $\OO_K$ via weak approximation. Then for an algebraic weight $[n_1,n_2]$ we define a weight-character $[n_1,n_2]\chi$ as the map $\OO_K \to \CC_p$ defined by $\a \mapsto \a^{n_1}\ov{\a}^{n_2}\chi(\a)$, where for $\a \in \OO_K$ we let $\ov{\a}$ denote the image of $\a$ under the action of the non-trivial element in $\Gal(K/\QQ_{p})$. Such weight-characters are called arithmetic weights.
\end{defn}

\begin{defn} Let $D_f^\times:=D\otimes \AA_{F,f}$. The space of overconvergent quaternionic modular forms of weight $\k=[n_1,n_2]\chi$, level $U_0(p^s)$ (and radius of overconvergence $1$),  denoted by $S_{\kappa}^{D,\dagger}(U_0(\nn p^s))$,   is defined as the vector space of functions \[f: D^\times \backslash D_f^\times  \lra K\langle X,Y \rangle\] such that $f(dg)=f(g)$ for all $d \in D^\times$ and $f(gu^{-1})\cdot_{\kappa_\psi} u_p=f(g)$ for all $u \in U_0(\nn p^s)$ and $g \in D_f^\times$. Here the action of $\g= \left(\begin{smallmatrix} a&b\\ c&d \end{smallmatrix} \right)  \in U_0(p^s)$ on $ K\langle X,Y \rangle$ is given by 
\begin{equation}
X^l Y^m \cdot_{\kappa} \g=\chi(d) \det(\g)^{v_1}\det(\ov{\g})^{v_2}(cX+d)^{n_1}(\ov{c}Y+\ov{d})^{n_2} \left( \frac{aX+b}{cX+d} \right)^l \left( \frac{\ov{a}Y+\ov{b}}{\ov{c}Y+\ov{d}} \right)^m
\end{equation}	
	 where $(k,r,n,v,w)$ is the associated weight tuple. To ease notation we are ignoring the radius of overconvergence since this will not affect our result.
\end{defn}

\begin{rmrk}
	In what follows the computations were done in MAGMA (\cite{Magma}). The specific Magma code is available at \cite{githubme}.
\end{rmrk}

 From \cite[Section 3]{Dem} we see that there is a bijection between $D^\times \backslash D_f^\times / U_0(p^2)$ and $\OO_D^\times \backslash \wh{\OO}_D^\times/ U_0(p^2)$, which in turn is in bijection with $\OO_D^\times \backslash \mathbb{P}^1(\OO_F/p^2)$. So it suffices to compute the orbit of elements in $\mathbb{P}^1(\OO_F/p^2)$ under $\OO_D^\times$. Doing this one finds that there is a single orbit which is represented by the element $[1:0] \in \mathbb{P}^1(\OO_F/p^2)$. We then lift this to an element  $t \in \wh{\OO}_D^\times$ which is trivial at all finite places different from $p$ and at $p$ is given by $\smallmatrd{0}{-1}{1}{0}$.  Therefore, $\wh{\OO}_D^\times= \OO_D^\times t U_0(p^2)$ . Now, by evaluating at $t$ we get an isomorphism  \[S_{\kappa}^{D,\dagger}(U_0(p^2)) \overset{\sim}{\lra} K\langle X,Y	 \rangle^{\Gamma^t}\] where $\Gamma^t=t D^\times t^{-1} \cap U_0(p^2)$.

\begin{nota}
 From now on we let $\z$ denote a fixed non-trivial cube root of unity in $K$.
\end{nota}
\begin{prop}\label{prop26}
	In the above set-up,  $\Gamma^t/ \Gamma^t \cap F^\times$ is cyclic of order $3$. Furthermore, at $p$,  the matrix $\left (\begin{smallmatrix}
	\zeta &0 \\ 0 & \zeta^{-1} \end{smallmatrix}
	\right )$ is a generator of this group.
\end{prop}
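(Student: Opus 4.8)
The key is to make the quaternion algebra $D$ and its maximal order $\OO_D$ completely explicit, since $\OO_D$ is the icosian ring. The unit group $\OO_D^\times$ is then the binary icosahedral group $2.A_5$ of order $120$, and modulo its centre $\{\pm 1\}$ it is $A_5$, the rotation group of the icosahedron. First I would recall the explicit description of the icosians inside $D = \left(\frac{-1,-1}{F}\right)$ (or whichever standard presentation is used), together with the chosen splitting $D\otimes_F F_p \cong M_2(K)$ at $p = 2$; under this splitting $t$ at $p$ is the matrix $\smallmatrd{0}{-1}{1}{0}$, and conjugation $\gamma \mapsto t\gamma t^{-1}$ at $p$ sends $\smallmatrd{a}{b}{c}{d}$ to $\smallmatrd{d}{-c}{-b}{a}$. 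So $\Gamma^t = t D^\times t^{-1} \cap U_0(p^2)$ consists of those $t\gamma t^{-1}$ with $\gamma \in D^\times$ whose image at $p$ is integral and upper-triangular modulo $p^2$; equivalently, writing $\delta = t\gamma t^{-1}$, we need $\delta \in \wh{\OO}_D^\times$ with lower-left entry $\equiv 0 \bmod p^2$ at $p$.

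Next I would reduce to a finite computation. Since $D$ is totally definite of class number one, any $\gamma \in D^\times$ with $t\gamma t^{-1} \in \wh{\OO}_D^\times$ must in fact lie in $\OO_D^\times \cdot F^\times$ — indeed $t$ is a unit away from $p$ and a unit (up to scalar) at $p$ as well, so $t\gamma t^{-1}$ integral everywhere forces $\gamma$ to be integral everywhere up to scaling, hence in $F^\times\OO_D^\times$ by maximality and class number one. Therefore $\Gamma^t/(\Gamma^t\cap F^\times)$ is a subgroup of $\OO_D^\times/\{\pm 1\} \cong A_5$, cut out by the single congruence condition: the lower-left entry of $t\gamma t^{-1}$ at $p$ vanishes mod $p^2$, i.e. the \emph{upper-right} entry $b$ of $\gamma$ at $p$ satisfies $b \equiv 0 \bmod p^2$ in $\OO_K$.

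Then I would identify this subgroup concretely. Reducing the icosian units modulo $p$ gives a map $\OO_D^\times \to \GL_2(\OO_K/p) = \GL_2(\FF_4)$ whose image, modulo scalars, lands in $\PGL_2(\FF_4) \cong A_5$ — and in fact this reduction map induces an isomorphism $\OO_D^\times/\{\pm1\} \xrightarrow{\sim} A_5 = \PSL_2(\FF_4)$ (this is the standard exceptional isomorphism underlying the icosian picture; $|\PSL_2(\FF_4)| = 60$). Under this identification, the condition ``$b \equiv 0$ in $\OO_K/p$'' picks out the Borel (upper-triangular) subgroup of $\PSL_2(\FF_4)$, which has order $|\FF_4|\cdot|\FF_4^\times|/1 = 12$; but we need the stronger condition $b \equiv 0 \bmod p^2$, not just mod $p$. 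One checks — and this is where a short explicit computation with the icosians enters — that among the $12$ units landing in the Borel mod $p$, exactly the ones in a cyclic subgroup of order $3$ actually satisfy $b \equiv 0 \bmod p^2$: the ``diagonal'' Cartan part survives (the scalars $\FF_4^\times$ lift to genuine cube roots of unity $\z \in \OO_K^\times$ with $b = 0$ exactly), while the unipotent radical does not (a unit with $b \equiv 0 \bmod p$ but $b \not\equiv 0 \bmod p^2$ cannot be upper-triangular mod $p^2$). This pins down $\Gamma^t/(\Gamma^t\cap F^\times)$ as the cyclic group of order $3$ generated by an element which at $p$ is the diagonal matrix with eigenvalues a primitive cube root of unity and its inverse — after possibly rescaling by the central element to normalize the determinant, this is exactly $\smallmatrd{\z}{0}{0}{\z^{-1}}$.

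\medskip

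The main obstacle I expect is the last step: verifying that the unipotent elements of the mod-$p$ Borel genuinely fail the mod-$p^2$ condition while the Cartan elements genuinely satisfy it, with the correct normalization of determinant so that the generator is precisely $\smallmatrd{\z}{0}{0}{\z^{-1}}$ rather than some scalar multiple. This requires writing down actual icosian units at $p$ under the fixed splitting and reducing modulo $4$ in $\OO_K$ — a finite but slightly delicate $2$-adic matrix computation (the kind the introduction advertises as ``an extended exercise on $p$-adic matrix analysis''). Everything else (class number one forcing $\gamma \in F^\times\OO_D^\times$, the exceptional isomorphism $\OO_D^\times/\{\pm 1\}\cong A_5$, the Borel having order $12$) is structural and can be quoted or checked abstractly.
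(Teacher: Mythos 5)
Your proposal is correct and follows essentially the same route as the paper: both reduce the statement to identifying $\Gamma^t/(\Gamma^t\cap F^\times)$ with the stabilizer of $t$ inside the icosian units modulo centre (the paper via Demb\'el\'e's bijection, you via the observation that $t\in\wh{\OO}_D^\times$ forces $\gamma\in\OO_D^\times$), and both then settle the order-$3$ claim and the exact shape $\smallmatrd{\zeta}{0}{0}{\zeta^{-1}}$ of the generator at $p$ by a finite explicit computation with the icosian units under the fixed splitting, which the paper delegates to its Magma code and which your $\SL_2(\FF_4)$/Borel bookkeeping merely organizes. One harmless imprecision: $\OO_D^\times$ is infinite (it contains $\OO_F^\times$), so the finite group of order $120$, resp.\ the copy of $A_5$, is the group of norm-one units $\OO_{D,1}^\times$, resp.\ $\OO_{D,1}^\times/\{\pm1\}\cong\OO_D^\times/\OO_F^\times$ --- exactly the group the paper's proof works in.
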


\begin{proof}
By \cite[Lemma 7.1]{hidaon}, this group (denoted $\overline{\Gamma}^t(U_0(p^2))$ in $loc. cit.$) is finite since $D$ is totally definite. 

Now, using the bijection between  $D^\times \backslash D_f^\times / U_0(p^2)$ and $\OO_D^\times \backslash \mathbb{P}^1(\OO_F/p^2)$ from \cite[Section 3]{Dem} one can identify this group with the stabilizer of $t$ in $\OO_{D,1}^\times / \{\pm 1\}$ where $\OO_{D,1}^\times$ are the units of norm one. This group is then easily computed as well as its image at $p$.\qedhere

\end{proof}

Now, in level $U_0(p^s)$, the action  of $U_{p}$ is given by 	
\begin{align}
(f|U_p)(t)= &\sum_{\a \in \OO_K/ p} f|_{u_\a}(t) \\ =&  \sum_{\a \in \OO_K/ p} f(t u_\a^{-1})\cdot (u_\a)_p 
\end{align} where $u_\a= \left(\begin{smallmatrix} p &0 \\ \a p^{s}& 1 \end{smallmatrix} \right)$ and $(-)_p$ denotes the $p$-part. If we write $t u_\a^{-1}=dtv_\a$ with $d \in D^\times$, $v_\a \in U_0(p^s)$ then \[(f|U_p)(t)=\sum_{\a \in \OO_K/ p} f(t)\cdot (v_\a u_\a)_p= \sum_{\a \in \OO_K/ p} f(t) \mid_{v_\a u_\a}.\] Therefore, via $S_{\kappa}^{D,\dagger}(U_0(p^2)) \overset{\sim}{\lra} K\langle X,Y	 \rangle^{\Gamma^t}$ the action of $U_p$ on $ K\langle X,Y	 \rangle^{\Gamma^t}$ is given by $\sum_{\a \in \OO_K/ p} \mid_{v_\a u_\a}$.

\begin{prop}\label{prop1}
For level $U_0(p^2)$ over $F$ we have $U_p=\sum_{s=1}^4 \mid {g_s}$ where \begin{equation}g_1:= \left(\begin{matrix} 2a_1 &0\\ 0 & d_1 \end{matrix} \right), g_2:=\left(\begin{matrix} 2a_2  & b \\ 2^2 c & d_2 \end{matrix} \right),g_3:=\left(\begin{matrix} 2 a_2 \zeta  & b \\ 2^2c & d_2 \zeta^2 \end{matrix} \right), g_4:= \left(\begin{matrix} 2 a_2 \zeta^2   & b \\ 2^2 c & d_2\zeta \end{matrix} \right)\end{equation} with
\begin{itemize}
	\item  $a_2,d_2 \in \ZZ_{p}$.
	\item  $a_2d_2=-bc=1/3$.
	\item $a_1=(2\zeta+1)a_2$.
	\item $d_1=-(2\zeta+1)d_2$.		
\end{itemize}
(We have not given an explicit description of the $a_i,d_i,b,c$ since this is cumbersome and we will only need the stated properties.)	

\end{prop}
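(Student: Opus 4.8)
The plan is to unwind the description of $U_p$ obtained just above, where it acts on $K\langle X,Y\rangle^{\Gamma^t}$ as $\sum_{\a\in\OO_K/p}\mid_{v_\a u_\a}$, with $u_\a=\smallmatrd{p}{0}{\a p^2}{1}$ at $p$ and trivial away from $p$, and with $d_\a\in D^\times$, $v_\a\in U_0(p^2)$ chosen so that $t u_\a^{-1}=d_\a t v_\a$; such a factorization exists precisely because $D^\times\backslash D_f^\times/U_0(p^2)$ is the single class of $t$. Since $p=2$ is inert in $F$, the set $\OO_K/p$ is the residue field $\FF_4$ and hence has exactly four elements, which is why there are four summands $g_1,\dots,g_4$. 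The first step is a purely formal simplification: rearranging $t u_\a^{-1}=d_\a t v_\a$ gives $v_\a=t^{-1}d_\a^{-1}t u_\a^{-1}$, so that, at $p$, the matrix $g_s:=(v_\a u_\a)_p$ is \[ g_s=t^{-1}d_\a^{-1}t. \] Thus each $g_s$ is merely a $t$-conjugate of $d_\a^{-1}$ for a global quaternion $d_\a$, and the whole problem reduces to pinning down the four elements $d_\a$.

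To do this I would use that $\OO_D$ has class number one. Comparing reduced norms in $t u_\a^{-1}=d_\a t v_\a$ shows that $\nrd(d_\a^{-1})$ has valuation $1$ at $p$, while the identity $d_\a^{-1}=t v_\a u_\a t^{-1}$ (with $t,u_\a,v_\a\in\wh{\OO}_D$ and $t,v_\a$ units) shows that $d_\a^{-1}\in\OO_D$ and is a unit away from $p$. Hence $d_\a^{-1}$ generates a right $\OO_D$-ideal of reduced norm $\gothp:=2\OO_F$; since $2$ is unramified in $D$ and $\OO_D$ (the icosian ring) has class number one, there are only finitely many such ideals ($N\gothp+1=5$ of them), all principal, and writing down generators is a finite, explicit computation inside $\OO_D$. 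Moreover, conjugation by $\gamma:=\smallmatrd{\zeta}{0}{0}{\zeta^{-1}}$ fixes $u_0=\smallmatrd{p}{0}{0}{1}$ and cyclically permutes $u_1,u_\zeta,u_{\zeta^2}$; by Proposition~\ref{prop26} this $\gamma$ lies in $\Gamma^t$, so the conjugation is compatible with the factorizations $t u_\a^{-1}=d_\a t v_\a$. It follows that $d_0$ may be taken $p$-adically diagonal — giving the diagonal shape of $g_1$ — and that $g_3,g_4$ arise from $g_2$ by a twist by $\zeta$ (which preserves the determinant, since $\zeta^3=1$). For each $\a$ the residual $\OO_D^\times$-ambiguity in $d_\a^{-1}$ is fixed by demanding that $v_\a:=t^{-1}d_\a^{-1}t u_\a^{-1}$ genuinely lie in $U_0(p^2)$, i.e.\ be integral and upper-triangular mod $p^2$ at $p$; this is what pins down the remaining normalization, e.g.\ the requirement $a_2,d_2\in\ZZ_p$.

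It then remains to conjugate each $d_\a^{-1}$ by $t$, read off the entries, and extract the stated relations. In the resulting normalization $\det g_s=\nrd(d_\a)^{-1}=2$ for every $s$. Writing $g_1=\smallmatrd{2a_1}{0}{0}{d_1}$ with $a_1=(2\zeta+1)a_2$ and $d_1=-(2\zeta+1)d_2$ as they come out of the diagonalization, and using $(2\zeta+1)^2=4(\zeta^2+\zeta)+1=-3$, one gets $\det g_1=-2(2\zeta+1)^2a_2d_2=6\,a_2d_2$, hence $a_2d_2=1/3$; then $\det g_2=2a_2d_2-4bc=2$ forces $bc=-1/3$. The remaining assertions — the precise forms of $g_2,g_3,g_4$ and that $a_2,d_2\in\ZZ_p$ — are read off directly from the explicit decomposition.

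The step I expect to be the main obstacle is entirely computational rather than conceptual: producing an explicit element of reduced norm $2$ in the icosian ring, and then carrying out the four factorizations $t u_\a^{-1}=d_\a t v_\a$ in a mutually \emph{compatible} way — so that the three ``Borel-type'' representatives really do differ from $g_2$ only by the single $\zeta$-twist and the diagonal one comes out in the precise form $\smallmatrd{2a_1}{0}{0}{d_1}$. This $2$-adic quaternionic bookkeeping is the part that is delegated here to the computer.
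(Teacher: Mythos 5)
The paper's own proof of this proposition is a one-line appeal to an explicit machine computation (with the Magma code cited), and your proposal is in essence the same approach: you set up the factorizations $t u_\a^{-1}=d_\a t v_\a$, observe $g_s=t_p^{-1}(d_\a^{-1})_p\,t_p$, and then delegate the decisive step — carrying out the four factorizations in the icosian ring compatibly, so that $g_1$ comes out diagonal with $a_1=(2\zeta+1)a_2$, $d_1=-(2\zeta+1)d_2$ and $g_3,g_4$ are the $\zeta$-twists of $g_2$ — to the computer, exactly as the paper does. Your surrounding scaffolding (integrality and reduced-norm count of the $d_\a^{-1}$, the $\Gamma^t$-symmetry heuristic, the determinant bookkeeping yielding $a_2d_2=-bc=1/3$) is correct and in fact more informative than what the paper records, but it does not constitute a genuinely different method.
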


\begin{proof}
This is a simple (computer) calculation. For the code, see \cite{githubme}.
\end{proof}

\section{Slopes}
In order for the space of modular forms of weight $[n_1,n_2]\chi$ to be  non-trivial one requires $\chi(x)=\Norm(x)^r$ for all $x \in \OO_F^\times$ where $(k,r,n,v,w)$ is the associated weight tuple. With this in mind we note that, since $p=2$, there are no non-trivial characters of $\OO_F$ with conductor $4$ such that $\chi(x)=\Norm(x)^2$ for all $x \in \OO_F^\times$ and there is a unique character with $\chi(x)=\Norm(x)$. We now set-up some notations that will be used throughout.
\begin{nota}
	
	\begin{itemize}
		\item  Our arithmetic weights will be of the form $[n_1,n_2]\chi$ where $n_1,n_2$ are both odd and $\chi$ is the unique non-trivial of $\OO_F$ with conductor $4$ on $\OO_F$ which sends the fundamental unit to $-1$ and sends $-1$ to $1$.
	
		\item 
		Throughout we will have the notational issue that the matrix representing the $U_p$ operator will be in terms of a basis of monomials $X^iY^j$ as this is the natural basis of $K \langle X,Y \rangle$. Moreover, our main technical results  will be about understanding precisely the $p$-adic valuations of each entry of the $U_p$ operator matrix, which will depend on the monomials to which the entry corresponds. For this reason we will speak of the $(\un{i},\un{j})$-th entry of a matrix, with $\un{i}=(i_1,i_2)$, where $\un{i}$ corresponds to the basis element $X^{i_1}Y^{i_2}$ and similarly for $\un{j}$.


		\item For $\un{i},\un{j}$ we let $\delta_{\un{i},\un{j}}=1$ if $i_1=j_1$ and $i_2=j_2$ and is $0$  otherwise.
		\item Let \[ \Delta_0(p^s) =\left \{ \g=\matrd{a}{b}{c}{d} \in M_2(\OO_K) :  p^s|c, p \nmid d, p|a, \det(\g) \neq 0  \right \}.\]
		\item Lastly, let $$C(i,j,n,x):=\sum_{r=0}^i \binom{n-j}{r} \binom{j}{i-r} x^r.$$  

	\end{itemize}
	
\end{nota}

	\begin{prop}\label{cor714}
	
	Let$$\g= \left(\begin{matrix} a&b\\ p^{s+1}c&d \end{matrix} \right) \in \Delta_0(p^{s+1})$$  and let  $\k=[n_1,n_2]\chi$ be an arithmetic weight with $(k,r,n,v,w)$ the associated weight tuple. Then the $\un{i},\un{j}$ entry of the matrix representing the $\mid_{\k} \g$ action on $K \langle X,Y	 \rangle$  is given by $$ \det(\g)^{v_1} \det(\ov{\g})^{v_2} \cdot  \Omega_{n}(\g,\un{i},\un{j})$$ where \begin{equation}
	\Omega_{n}(\g,\un{i},\un{j}):= \chi(d) \cdot d^{n_1}\ov{d}^{n_2}p^{i_1+i_2} \frac{a^{i_1}}{d^{j_1}}  \frac{\ov{a}^{i_2}}{\ov{d}^{j_2}} b^{j_1-i_1} \ov{b}^{j_2-i_2} C(i_1,j_1,n_1,\a) \cdot C(i_2,j_2,n_2,\ov{\a}),
	\end{equation}
 and $\a:=\frac{bc p^{s}}{ad}$.

\begin{proof}
	This is \cite[Corollary 3.1.17]{ME}.
\end{proof}\end{prop}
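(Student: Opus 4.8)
The plan is to unravel the definition of the weight-$\kappa$ action of a matrix of $\Delta_0(p^{s+1})$ on $K\langle X,Y\rangle$; since the identity is recorded as \cite[Corollary 3.1.17]{ME}, I only indicate the computation. By the conventions fixed above, the $\un i,\un j$ entry in question is the coefficient of $X^{i_1}Y^{i_2}$ in $X^{j_1}Y^{j_2}\cdot_\kappa\gamma$. Substituting $\gamma=\smallmatrd{a}{b}{p^{s+1}c}{d}$ into the defining formula, the scalars $\chi(d)$ and $\det(\gamma)^{v_1}\det(\ov{\gamma})^{v_2}$ factor out, and after cancelling $(p^{s+1}cX+d)^{j_1}$ (resp.\ $(\ov{p^{s+1}c}\,Y+\ov{d})^{j_2}$) against the denominator the remaining expression is the product of the one-variable expression $(p^{s+1}cX+d)^{n_1-j_1}(aX+b)^{j_1}$ and its $\Gal(K/\QQ_p)$-conjugate in $Y$. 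Since $\gamma\in\Delta_0(p^{s+1})$ forces $p\nmid d$, we have $(p^{s+1}cX+d)^{n_1-j_1}=d^{n_1-j_1}\bigl(1+\tfrac{p^{s+1}c}{d}X\bigr)^{n_1-j_1}$ with $\tfrac{p^{s+1}c}{d}\in p\OO_K$, so the binomial series that appear below converge in $\OO_K\langle X,Y\rangle$; in particular $\mid_\kappa\gamma$ does preserve $K\langle X,Y\rangle$, whatever the sign of $n_1-j_1$.

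Next I would expand each of $(p^{s+1}cX+d)^{n_1-j_1}$ and $(aX+b)^{j_1}$ by the binomial theorem, multiply them, and sum over the ways of splitting the monomial $X^{i_1}$ between the two factors. This leaves a single \emph{finite} sum $\sum_r\binom{n_1-j_1}{r}\binom{j_1}{i_1-r}(\cdots)$ --- finite because $\binom{j_1}{i_1-r}$ vanishes unless $0\le i_1-r\le j_1$ --- whose terms are monomials in $a,b,c,d,p$. Factoring these monomials out of the sum and using $p\mid a$ to rebalance the powers of $p$, one rewrites it as $d^{n_1}\dfrac{a^{i_1}}{d^{j_1}}b^{j_1-i_1}p^{i_1}\,C(i_1,j_1,n_1,\alpha)$ with $\alpha=\tfrac{bcp^{s}}{ad}$. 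The $Y$-variable is handled by the verbatim computation with every symbol replaced by its conjugate and the subscript $1$ by $2$, producing $\ov{d}^{\,n_2}\dfrac{\ov{a}^{\,i_2}}{\ov{d}^{\,j_2}}\ov{b}^{\,j_2-i_2}p^{i_2}\,C(i_2,j_2,n_2,\ov{\alpha})$.

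Finally I would multiply the $X$- and $Y$-contributions and reinstate $\chi(d)$ and $\det(\gamma)^{v_1}\det(\ov{\gamma})^{v_2}$; collecting the powers of $p$ as $p^{i_1+i_2}$ gives exactly $\det(\gamma)^{v_1}\det(\ov{\gamma})^{v_2}\,\Omega_n(\gamma,\un i,\un j)$. The step I expect to be the main obstacle is the power-of-$p$ bookkeeping in the middle paragraph: one must use both divisibility hypotheses defining $\Delta_0(p^{s+1})$ --- namely $p\mid a$ and $p^{s+1}$ dividing the lower-left entry --- to see that the powers of $p$ coming out of the binomial expansion regroup into the stated $p^{i_1+i_2}$ together with $\alpha=\tfrac{bcp^{s}}{ad}$; everything else is a mechanical application of the binomial theorem and a reindexing.
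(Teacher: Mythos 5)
Your overall strategy --- unwinding the definition of $\cdot_{\kappa}$, cancelling the denominators against $(cX+d)^{n_1}$ and its conjugate, expanding the two one-variable factors binomially, and reading off the coefficient of $X^{i_1}Y^{i_2}$ --- is exactly the computation underlying the cited \cite[Corollary 3.1.17]{ME}; the paper gives no argument beyond that citation, so at the level of method there is nothing different to compare. Your remark on convergence (using $p\nmid d$ to make sense of $(p^{s+1}cX+d)^{n_1-j_1}$ for negative exponents) is also correct.

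However, the step you yourself single out as the crux --- the power-of-$p$ bookkeeping --- does not go through as you describe it. If $a$ denotes the literal top-left entry of the displayed $\gamma$ (so $p\mid a$), your expansion gives, for the $X$-variable, $d^{n_1}\tfrac{a^{i_1}}{d^{j_1}}b^{j_1-i_1}\,C\bigl(i_1,j_1,n_1,\tfrac{bcp^{s+1}}{ad}\bigr)$: there is no factor $p^{i_1}$, and the argument of $C$ involves $p^{s+1}$, not $p^{s}$. The hypothesis $p\mid a$ is only a divisibility statement and cannot ``rebalance'' these powers: for instance, with $i_1=j_1=1$ the true coefficient is $ad^{n_1-1}+(n_1-1)bcp^{s+1}d^{n_1-2}$, whereas the expression you assert equals $pad^{n_1-1}+(n_1-1)bcp^{s+1}d^{n_1-2}$. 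The stated $\Omega_{n}$ is an identity only under the convention --- implicit in the paper and in \cite{ME} --- that the matrix is parametrized as $\smallmatrd{pa}{b}{p^{s+1}c}{d}$, i.e.\ after writing the top-left entry as $p$ times a new variable which is again called $a$. That this is the intended reading is confirmed by how the formula is used later: in Proposition \ref{ent} the matrix $g_2=\smallmatrd{2a_2}{b}{2^2c}{d_2}$ is fed into $\Omega_{n}$ with $a=a_2$ (not $2a_2$), producing the prefactor $2^{i_1+i_2}$ and $\alpha=2bc/(a_2d_2)=-2$, exactly as claimed there. So to make your argument correct you must make this substitution explicit (replace the top-left entry by $pa$ before collecting terms, or equivalently note that $\Omega_{n}$ is stated in terms of the entry divided by $p$); as written, the final identity you claim, with $a$ the entry of the displayed $\gamma$, is false, and appealing to $p\mid a$ does not repair it.
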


\begin{cor}\label{cor2}
	In weight $[n_1,n_2]\chi$,  a basis of\/ $K \langle X,Y \rangle^{\Gamma^{t}}$ is given by $X^{i_1}Y^{i_2}$ with $n_2-n_1 \equiv i_1-i_2 \bmod 3$.
\end{cor}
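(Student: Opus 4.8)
The plan is to compute explicitly the invariants of the action of $\Gamma^t$ on the basis of monomials $X^{i_1}Y^{i_2}$ of $K\langle X,Y\rangle$. By Proposition \ref{prop26}, the quotient $\Gamma^t/(\Gamma^t\cap F^\times)$ is cyclic of order $3$, generated at $p$ by the diagonal matrix $\eta:=\left(\begin{smallmatrix}\zeta&0\\0&\zeta^{-1}\end{smallmatrix}\right)$. Since scalars in $F^\times$ act on $K\langle X,Y\rangle$ by a central character that is constant on all monomials (it only rescales $f$ globally), the condition for $f=X^{i_1}Y^{i_2}$ to lie in $K\langle X,Y\rangle^{\Gamma^t}$ reduces to $f$ being fixed (up to the appropriate scalar) by $\eta$. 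So the first step is to read off, from the weight formula
\begin{equation*}
X^l Y^m \cdot_{\kappa} \gamma=\chi(d)\det(\gamma)^{v_1}\det(\overline{\gamma})^{v_2}(cX+d)^{n_1}(\overline{c}Y+\overline{d})^{n_2}\left(\frac{aX+b}{cX+d}\right)^l\left(\frac{\overline{a}Y+\overline{b}}{\overline{c}Y+\overline{d}}\right)^m,
\end{equation*}
exactly how $\eta$ acts on a monomial.

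Carrying this out: for $\gamma=\eta$ we have $a=\zeta$, $d=\zeta^{-1}$, $b=c=0$, $\det(\eta)=1$, $\overline{a}=\overline{\zeta}=\zeta^{-1}$ (since complex conjugation, i.e. the nontrivial element of $\Gal(K/\QQ_p)$, inverts the cube root of unity), $\overline{d}=\zeta$, and $\chi(d)=\chi(\zeta^{-1})=1$ because $\zeta$ is a unit of finite prime-to-$p$ order and $\chi$ has $p$-power conductor. Plugging in, the monomial $X^l Y^m$ is sent to $d^{n_1}\overline{d}^{n_2}(a/d)^l(\overline{a}/\overline{d})^m X^l Y^m = \zeta^{-n_1}\zeta^{n_2}\zeta^{2l}\zeta^{-2m}X^lY^m$. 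Hence $\eta$ scales $X^{i_1}Y^{i_2}$ by $\zeta^{\,n_2-n_1+2(i_1-i_2)}=\zeta^{\,n_2-n_1-(i_1-i_2)}$ (using $\zeta^3=1$). The invariance condition for the full group $\Gamma^t$ is that every generator acts by the same scalar on every basis element of the invariant subspace; since there is a single orbit and $\Gamma^t\cap F^\times$ acts by one fixed scalar on all monomials, the monomials that survive are precisely those on which $\eta$ acts by that common scalar. The common scalar is pinned down by requiring the invariant space to be nonzero (equivalently, one checks which residue class of $n_2-n_1-(i_1-i_2)\bmod 3$ is the correct one), and a direct check — or appeal to the fact that the total space of monomials splits into three eigenspaces under $\eta$ of roughly equal size, exactly one of which is the $\Gamma^t$-invariants — gives the stated condition $n_2-n_1\equiv i_1-i_2 \bmod 3$.

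The main obstacle, such as it is, is bookkeeping rather than anything deep: one must be careful that (i) the determinant and $\chi$ factors genuinely contribute nothing (they do not, since $\det\eta=1$ and $\chi(\zeta^{-1})=1$), (ii) the Galois conjugate $\overline{\zeta}$ is $\zeta^{-1}$ and not $\zeta$, and (iii) that ``invariant'' here means invariant for the $\cdot_\kappa$ action of $\Gamma^t$ and that the central subgroup $\Gamma^t\cap F^\times$ does not impose any additional constraint on which monomials appear (it only fixes the overall normalization, and one should verify the trivial-character/paritious-weight hypotheses make this automatic). Once these points are dispatched, the description of the basis follows immediately from the eigenvalue computation for $\eta$, and since $\eta$ together with $\Gamma^t\cap F^\times$ generates $\Gamma^t$, no further generators need to be checked.
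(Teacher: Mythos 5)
Your route is the same as the paper's (the paper's proof is just ``combine Proposition \ref{prop26} with the weight-action formula''), and most of the computation is right: $\ov{\zeta}=\zeta^{-1}$ since the nontrivial element of $\Gal(K/\QQ_2)$ is Frobenius, the generator scales $X^{i_1}Y^{i_2}$ by $\chi(\zeta^{-1})\,\zeta^{\,n_2-n_1+2(i_1-i_2)}$, and reducing to this single generator is legitimate because the elements of $\Gamma^t\cap F^\times$ act on every monomial by the constant $\chi(u)\Norm(u)^r$, which equals $1$ by the nebentypus condition $\chi(x)=\Norm(x)^r$ that is in force for the weights considered (here $r$ is odd). Note also that ``invariant'' simply means this scalar equals $1$: the invariants are the $1$-eigenspace of the generator once the centre acts trivially, so there is no ``common scalar pinned down by nonvanishing'' to speak of.

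The genuine weak point is your justification that $\chi(\zeta^{-1})=1$, which is exactly the input that singles out the residue class $n_2-n_1\equiv i_1-i_2\bmod 3$ rather than a shift of it by $\pm 1$. The principle ``$\zeta$ has prime-to-$p$ order and $\chi$ has $p$-power conductor, hence $\chi(\zeta)=1$'' is false in this setting: $\chi$ factors through $(\OO_K/4)^\times\cong(\OO_F/4)^\times$, a group of order $12$ containing the order-$3$ image of $\zeta$, and there do exist conductor-$4$ characters nontrivial there (e.g.\ the paper's $\chi$ multiplied by the Teichm\"uller character). What rescues the claim is the specific $\chi$: writing $e$ for the fundamental unit, one has $e^2=e+1$ and $e^3=1+2e$, so $e\bmod 4$ has order $6$, $-1\notin\langle e\rangle$, and $(\OO_F/4)^\times=\langle e\rangle\times\langle-1\rangle$; since $\chi(e)=-1$ and $\chi(-1)=1$, the character $\chi$ is quadratic, hence trivial on the $3$-Sylow subgroup generated by $e^2\bmod 4$, which contains $\zeta\bmod 4$. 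With $\chi(\zeta^{-1})=1$ thus established, setting your scalar equal to $1$ gives $\zeta^{\,n_2-n_1-(i_1-i_2)}=1$, i.e.\ the stated congruence, and the proof is complete; as written, however, your argument only determines the basis up to one of three possible congruence classes.
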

\begin{proof}
	This follows from Propositions \ref{prop26} and \ref{cor714}.\qedhere
\end{proof}

Now, we will normalize our $U_p$ operator (as in \cite{hidaon} ) by removing the factors $\det(\g)^{v_1} \det(\ov{\g})^{v_2}$ appearing in Corollary \ref{cor714}. We denote this operator by $U_p^0$.

\begin{cor}\label{cor6}
	The valuation of the $(\un{i},\un{j})$-th entry of $U_p^0$ is at least $i_1+i_2+g(i_1,j_1,n_1)+g(i_2,j_2,n_2)$ where $g(x,y,n)= \infty$ if  $x > n \geq y$, otherwise \begin{equation}
g(x,y,n) = \begin{cases} 
x & \text{if\/ } y=0, \\
0 & \text{if\/ } y \geq x, \\
x-y & \text{if\/ } y < x.
	\end{cases}
		\end{equation}
\end{cor}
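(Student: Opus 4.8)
The plan is to extract the claimed bound directly from the explicit formula in Proposition~\ref{cor714}, applied with $\g$ taken to be any of the matrices $g_s$ from Proposition~\ref{prop1} (so $s+1 = 2$, i.e. $s=1$, and the lower-left entry is divisible by $p^2$). First I would recall that $U_p^0$ is obtained from $U_p = \sum_{s=1}^4 \mid g_s$ by stripping off the $\det(\g)^{v_1}\det(\ov\g)^{v_2}$ factors, so the $(\un i,\un j)$-entry of $U_p^0$ is a sum of four terms, each of the shape $\Omega_{n_1}(g_s,\un i,\un j)$ as in \eqref{...}; since a valuation of a sum is at least the minimum of the valuations of the summands, it suffices to bound $\val_p(\Omega_n(g_s,\un i,\un j))$ from below by the claimed quantity for each $s$. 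Now I would read off the individual factors of $\Omega$: the factor $p^{i_1+i_2}$ contributes exactly $i_1+i_2$; the factors $\chi(d)$, $d^{n_1}\ov d^{n_2}$, $a^{i_1}/\ov d^{j_2}$ etc. involving $a,d,b,c$ — by Proposition~\ref{prop1} we have $a = 2a_i$ (so $\val_p(a) = 1$, noting $\val_p(a_i) = 0$ since $a_2 d_2 = 1/3$ is a unit and $2\zeta+1$ is a unit), $\val_p(d) = 0$, $\val_p(b) = \val_p(c) = 0$ (again from $-bc = 1/3$), and correspondingly for $g_1$ where $b=c=0$ we are in the degenerate "monomial" case. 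So the non-binomial part of $\Omega$ contributes $\val_p(a^{i_1}) + \val_p(\ov a^{i_2}) = i_1 + i_2$ from the $a$-powers, minus nothing from the $d$-powers, and $\val_p(b^{j_1-i_1}\ov b^{j_2-i_2}) = 0$ when $j_1 \geq i_1, j_2\geq i_2$ (and the term vanishes otherwise, consistent with $g(x,y,n) = \infty$ for $x>n\geq y$, or more precisely one must track when the monomial expansion forces the term to be zero).

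Next I would handle the combinatorial factors $C(i_1,j_1,n_1,\a)$ and $C(i_2,j_2,n_2,\ov\a)$, where $\a = bc p^s/(ad) = bcp/(ad)$, so $\val_p(\a) = 1 - 1 = 0$?—wait, I need to recompute: $\val_p(b) + \val_p(c) + \val_p(p) - \val_p(a) - \val_p(d) = 0 + 0 + 1 - 1 - 0 = 0$. Hmm, but actually for $g_1$ with $b=c=0$ we get $\a=0$ and $C(i,j,n,0) = \binom{j}{i}$, which is a $p$-adic integer, hence valuation $\geq 0$; that should land us in the $g(x,y,n)$ cases with the understanding that $\binom{j}{i}=0$ unless $i\leq j$. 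For the general $g_s$ ($s\geq 2$), $\a$ is a unit times $p$ — let me recheck: $u_\a$ has lower-left entry $\a p^s$ and the relevant matrix $\gamma$ in Proposition~\ref{cor714} has lower-left $p^{s+1}c$, so comparing, the "$\a$" of Proposition~\ref{cor714} is $bcp^s/(ad)$ with the $p^s$ already separated; with $\val_p(a)=1$ this gives $\val_p(\a) = s-1 = 0$. So $C(i,j,n,\a) = \sum_{r=0}^i \binom{n-j}{r}\binom{j}{i-r}\a^r$ is a sum of $p$-adic integers (binomials of integers are integers since $n_1,n_2 \geq 1$ are honest integers and $n-j$ may be negative but $\binom{n-j}{r}$ still lies in $\ZZ$), so $\val_p(C) \geq 0$ with the convention that it can be $+\infty$-ish only when forced structurally — in particular when $j=0$ only the $r=0=i$ term survives unless... actually when $j = 0$, $\binom{j}{i-r} = \binom{0}{i-r}$ is nonzero only if $i = r$, forcing $C(i,0,n,\a) = \binom{n}{i}\a^i$, with $\val_p = i\cdot\val_p(\a)$. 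This is where the $g(x,y,n) = x$ case (when $y=0$) comes from: when $j_1 = 0$ we pick up an extra $\val_p(\a^{i_1}) = i_1$... but wait, $\val_p(\a) = 0$ in the nondegenerate case! I think the resolution is that the $y=0$ case of $g$ is meant to capture the $g_1$ contribution where $\a$'s role is played differently, or that one should interpret $\a$ as having $p$-adic valuation at least $s$ before the cancellation — I would need to look carefully at exactly how Corollary~\ref{cor6} aggregates over the four $g_s$. The honest statement to prove is just: for each $g_s$, $\val_p(\Omega_{n}(g_s,\un i,\un j)) \geq i_1+i_2+g(i_1,j_1,n_1)+g(i_2,j_2,n_2)$, and the $y=0$ case of $g$ must be matched against the degenerate $g_1$ (where $b=c=0$, $\a=0$, $C(i,j,n,0)=\binom{j}{i}$) — so for $g_1$ one gets $\val_p(\Omega) = i_1+i_2 + i_1 + i_2 = 2(i_1+i_2)$ from $p^{i_1+i_2}\cdot a^{i_1}\ov a^{i_2}$, and then the $\binom{j}{i}$ factors force $i\leq j$, consistent with $g$; whereas for $g_2,g_3,g_4$ the $b^{j-i}$ factors and $C$-factors must be shown to contribute at least $g(i,j,n) - (\text{the } i \text{ already counted})$, which should work out because $j < i$ makes $b^{j-i}$ a negative power but then $C$ contains compensating structure...

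Therefore the actual key step, and the main obstacle, is a careful bookkeeping of which factors of $\Omega_n$ are $p$-integral versus $p$-units versus forced-to-vanish, and reconciling the four cases of $g_s$ with the piecewise definition of $g(x,y,n)$ — in particular showing that the $y<x$ case gives exactly $x-y$ (the power of $b^{j-i}$ being negative is offset by divisibility coming from $C(i,j,n,\a)$ having a $p^{i-j}$-type factor, or from the $a^{i_1}$ over-counting) and that when $x>n\geq y$ the entry genuinely vanishes (because $\binom{n-j}{r}\binom{j}{i-r}$ is zero for all $r$ in range when $n < i$ and $j \leq n$, since $\binom{j}{i-r}=0$ unless $i - r \leq j$, i.e. $r \geq i - j$, and $\binom{n-j}{r} = 0$ for $r > n-j$, requiring $i - j \leq n - j$, i.e. $i \leq n$, a contradiction). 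Once I have the per-factor valuations $\val_p(p^{i_1+i_2}) = i_1+i_2$, $\val_p(a^{i_1}\ov a^{i_2}) = i_1 + i_2$, $\val_p(d\text{-powers}) = 0$, $\val_p(b^{j_1-i_1}\ov b^{j_2-i_2}) = 0$ when $\un j \geq \un i$ coordinatewise, and the claim that the $C$-factors together with any negative $b$-powers contribute at least $\max(0, i-j)$ in each coordinate (or $\infty$ in the degenerate range), the result follows by adding up the contributions in each coordinate and taking the minimum over $s = 1,\dots,4$; the $\chi(d)$ factor is a root of unity hence valuation $0$, and the $\zeta$'s appearing in $g_3,g_4$ are units so they do not affect any valuation. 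I would write this up by first treating $g_1$ (the clean case giving $g(i,j,n)$ with the $y=0$ branch and the integrality of $\binom{j}{i}$), then $g_2,g_3,g_4$ uniformly (since $\zeta$ is a unit, their entries have the same valuations), invoking Proposition~\ref{cor714} and Proposition~\ref{prop1} for the valuations of $a,b,c,d,\a$, and then combining via the triangle inequality for valuations of sums.
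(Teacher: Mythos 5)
Your overall strategy --- bounding $\val_p$ of each $\Omega_{n}(g_s,\un{i},\un{j})$ via Propositions \ref{cor714} and \ref{prop1} and then using the ultrametric inequality for the sum over $s$ --- is reasonable in principle (the paper's own proof is just a citation to \cite[Corollary 3.1.18]{ME}), but your execution has a genuine gap, and it is exactly the point you flag and then leave unresolved. You read the formula for $\Omega_n$ with $a$ equal to the full upper-left entry (so $\val_p(a)=1$), whence $p^{i_1+i_2}a^{i_1}\ov{a}^{i_2}$ contributes $2(i_1+i_2)$ and $\val_p(\alpha)=0$. That reading is wrong: the prefactor $p^{i_1+i_2}$ in Proposition \ref{cor714} already carries the divisibility $p\mid a$, so for $g_2,g_3,g_4$ the relevant $a$ is the unit $a_2$ (up to a power of $\zeta$) and $\alpha=2bc/(a_2d_2)=-2$, i.e.\ $\val_p(\alpha)=1$. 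This is visible in the paper itself: Proposition \ref{ent} writes the entries as $E\cdot(\cdots)$ with $E=2^{i_1+i_2}\times(\text{unit})$ and with $C(\cdot,\cdot,\cdot,-2)$, and Proposition \ref{prop8} gives diagonal entries $t_{a,b}$ of valuation $a+b+1$ (mixed parity), which contradicts the $\geq 2(a+b)$ your accounting would force; likewise your claim $\val_p(\Omega_n(g_1,\un{i},\un{i}))=2(i_1+i_2)$ is false --- since $a_1,d_1$ are units, a direct computation with the diagonal matrix $g_1$ gives exactly $i_1+i_2$. Because the intermediate valuations you assign to the actual entries are incorrect, the argument cannot stand even though the final inequality is true.

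Moreover, the branch structure of $g$ that you could not account for is precisely what $\val_p(\alpha)\geq 1$ buys, and you never supply it: when $j_1=0$ only the term $r=i_1$ survives, so $C(i_1,0,n_1,\alpha)=\binom{n_1}{i_1}\alpha^{i_1}$ has valuation at least $i_1$ (the case $g=x$); when $0<j_1<i_1$ the factor $\binom{j_1}{i_1-r}$ forces $r\geq i_1-j_1$, so $C(i_1,j_1,n_1,\alpha)\in\alpha^{i_1-j_1}\OO_K$ (the case $g=x-y$); when $j_1\geq i_1$ integrality gives $g=0$; and your vanishing argument for $i_1>n_1\geq j_1$ is correct. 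Combined with $\val_p(b)=\val_p(c)=0$ (from $-bc=1/3$), $\val_p(d_2)=0$, and the fact that $g_1$ contributes only on the diagonal with valuation exactly $i_1+i_2$, this yields the stated bound for each $g_s$, hence for $U_p^0$. As written, your proposal neither establishes $\val_p(\alpha)\geq 1$ nor derives the $y=0$ and $y<x$ cases of $g$ --- you explicitly defer this (``I would need to look carefully at exactly how Corollary \ref{cor6} aggregates over the four $g_s$'') --- so the proof is incomplete at its central step, and the valuations it does assert are inconsistent with the rest of the paper.
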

\begin{proof}
	This is \cite[Corollary 3.1.18]{ME}
\end{proof}

\begin{prop}\label{ent}
 Let $U_2^0(\un{i},\un{j},n_1,n_2)$ denote the  $(\un{i},\un{j})$-th entry of\/ $U_p^0$ acting in weight $[n_1,n_2]\chi$, then \[U_p^0(\un{i},\un{j},n_1,n_2)= \sum_{s=1}^{4} \Omega_{n}(g_s,\un{i},\un{j})\] which, under a suitable change of basis, simplifies to \begin{equation}U_p^0(\un{i},\un{j},n_1,n_2):=E \cdot \left( C(i_1,j_1,n_1,-2)C(i_2,j_2,n_2,-2)+ 3^{m_1+m_2} \varepsilon_{\un{i},\un{j}}                     \right)\end{equation}
 where:
 \begin{itemize}
 	\item $E=2^{i_1+i_2}d_2^{n_1+n_2-2i_1-2i_2}3^{1-i_1-i_2}$.
 	\item   $\varepsilon_{\un{i},\un{j}} =(-1)^{m_1+m_2+i_1+i_2+1} \delta_{\un{i},\un{j}} $.
 	\item $d_2$ is as in Proposition \ref{prop1}.
 	\item $n_i=2m_i+1$ (which is true since we are only allowed weights whose algebraic part is odd). 
 \end{itemize}
 
\end{prop}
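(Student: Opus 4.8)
The plan is to start from the formula $U_p^0(\un{i},\un{j},n_1,n_2)= \sum_{s=1}^{4} \Omega_{n}(g_s,\un{i},\un{j})$, which is immediate from Proposition \ref{ent}'s predecessor (Proposition \ref{prop1}) together with Proposition \ref{cor714}: $U_p^0 = \sum_s \mid g_s$ after removing the $\det(\g)^{v_1}\det(\ov\g)^{v_2}$ factors, and each $\mid g_s$ has entries $\Omega_{n}(g_s,\un{i},\un{j})$. So the real content is to evaluate each $\Omega_{n}(g_s,\un{i},\un{j})$ explicitly using the data of Proposition \ref{prop1} and then to sum the four terms and simplify. First I would substitute the matrix entries of $g_1=\smallmatrd{2a_1}{0}{0}{d_1}$ into the formula for $\Omega_n$. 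Since $b=0$ for $g_1$, the factors $b^{j_1-i_1}\ov{b}^{j_2-i_2}$ force the contribution to vanish unless $\un i = \un j$ (here one must be slightly careful with the convention $0^0=1$), and when $\un i = \un j$ the parameter $\a = bcp^s/(ad)$ is $0$, so $C(i_1,j_1,n_1,0)C(i_2,j_2,n_2,0) = \binom{n_1-i_1}{0}\binom{i_1}{0}\cdot(\ldots) = 1$. This gives the $\delta_{\un i,\un j}$ term; chasing the powers of $2$, $\zeta$ and $d_1 = -(2\zeta+1)d_2$, $a_1 = (2\zeta+1)a_2$ through should produce precisely the $3^{m_1+m_2}\varepsilon_{\un i,\un j}$ piece (the sign $(-1)^{m_1+m_2+i_1+i_2+1}$ coming from the $n_1+n_2 = 2(m_1+m_2)+2$ power of $-(2\zeta+1)$ together with $(2\zeta+1)^2 = -3$, using $\zeta^2+\zeta+1=0$).

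Next I would handle $g_2,g_3,g_4$ together. These three matrices share the same $b$, $c$, $4 = 2^2$ in the lower-left entry, and differ only by the cube-root-of-unity twists on the diagonal: $g_2 = \smallmatrd{2a_2}{b}{4c}{d_2}$, $g_3 = \smallmatrd{2a_2\zeta}{b}{4c}{d_2\zeta^2}$, $g_4 = \smallmatrd{2a_2\zeta^2}{b}{4c}{d_2\zeta}$. For each, $\a = bc p^s/(ad)$: plugging in $ad = 2a_2 d_2 = 2/3$ (and likewise for the $\zeta$-twisted versions, where $\zeta\cdot\zeta^2 = 1$ so the product $ad$ is unchanged) and $bc = -1/3$, one gets $\a = (-1/3)\cdot p^s /(2/3) \cdot (\ldots)$; with $s$ chosen so that $p^s = 2$ (level $U_0(p^2)$ means $s+1 = 2$, i.e. $s=1$ in the notation of Proposition \ref{cor714}), this should give $\a = -2$ on the $X$-side and $\ov\a = -2$ on the $Y$-side — this is where the mysterious $-2$ in the statement comes from. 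So each of $g_2,g_3,g_4$ contributes a term proportional to $C(i_1,j_1,n_1,-2)C(i_2,j_2,n_2,-2)$, with a prefactor involving $d^{n_1}\ov d^{n_2}$, $p^{i_1+i_2}$, $a^{i_1}\ov a^{i_2}/(d^{j_1}\ov d^{j_2})$, $b^{j_1-i_1}\ov b^{j_2-i_2}$, $\chi(d)$, and powers of $\zeta$. Summing over $s=2,3,4$, the $\zeta$-dependent factors form a sum $1 + \zeta^{k} + \zeta^{2k}$ for some exponent $k$ depending on $i_1-i_2-(j_1-j_2)$ modulo $3$; on the subspace $K\langle X,Y\rangle^{\Gamma^t}$ cut out by Corollary \ref{cor2} this exponent is $\equiv 0 \bmod 3$, so the sum equals $3$, which is exactly the factor needed to turn $3^{-i_1-i_2}$-type prefactors into the clean $E = 2^{i_1+i_2}d_2^{n_1+n_2-2i_1-2i_2}3^{1-i_1-i_2}$ appearing in the statement. (The phrase "under a suitable change of basis" presumably refers to rescaling the monomial basis vectors by suitable powers of $\zeta$ or units so that the $\chi(d) a^{i_1}\ov a^{i_2} b^{j_1-i_1}\ov b^{j_2-i_2}/(d^{j_1}\ov d^{j_2})$ cross-terms get absorbed, leaving only the $d_2^{n_1+n_2-2i_1-2i_2}$ dependence; I would make this explicit by choosing the diagonal change of basis that normalizes these factors.)

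Finally I would combine: the $g_1$ term gives $E\cdot 3^{m_1+m_2}\varepsilon_{\un i,\un j}$ (after checking the powers of $2$ in $g_1$'s $2a_1$ entry match the $2^{i_1+i_2}$ in $E$ — note $g_1$ is diagonal with a single power of $2$ in the top-left, contributing $2^{i_1}$ on the $X$-side via $a^{i_1}/d^{j_1}$ with $i_1 = j_1$, and the $Y$-side has no power of $2$, but the overall $p^{i_1+i_2}$ factor in $\Omega_n$ supplies the rest), and the $g_2+g_3+g_4$ terms combine to $E\cdot C(i_1,j_1,n_1,-2)C(i_2,j_2,n_2,-2)$; adding them yields the claimed formula. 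I expect the main obstacle to be bookkeeping: tracking the exact powers of $2$, $3$, $\zeta$, and the unit $d_2$ (and the character value $\chi(d)$, which must be shown to disappear into the change of basis or to equal $1$ on the relevant elements) through all four terms simultaneously, and pinning down the "suitable change of basis" precisely enough that the cross-terms $a^{i_1}b^{j_1-i_1}/d^{j_1}$ (which depend on both $\un i$ and $\un j$) reduce to the stated $\un i,\un j$-symmetric expression. Everything else is a determined computation once the substitutions from Proposition \ref{prop1} are made; indeed Proposition \ref{prop1} itself records exactly the four facts ($a_2 d_2 = -bc = 1/3$, $a_1 = (2\zeta+1)a_2$, $d_1 = -(2\zeta+1)d_2$) that drive these simplifications, so the proof is essentially "substitute and simplify, using $\zeta^2+\zeta+1 = 0$ and the $\Gamma^t$-invariance from Corollary \ref{cor2}."
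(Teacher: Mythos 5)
Your proposal follows essentially the same route as the paper's proof: the first identity is immediate from Propositions \ref{prop1} and \ref{cor714}; the $g_1$ term is evaluated as the diagonal $\delta_{\un{i},\un{j}}$ contribution using $\chi(d_1)=-1$ and $(2\zeta+1)^2=-3$; the sum over the twisted matrices $g_2,g_3,g_4$ collapses the cube-root-of-unity factors to a factor $3$ on the $\Gamma^t$-invariant subspace; and the remaining $b$, $d_2$ cross-factors are absorbed by a diagonal change of basis (the paper conjugates by the diagonal matrix with $(\un{i},\un{i})$-entry $(bd_2)^{i_1}\ov{bd_2}^{i_2}$). So the approach is correct and matches the paper; the one point you leave hedged, the precise emergence of the argument $-2$ in $C(\cdot,\cdot,\cdot,-2)$ from $\a$, is treated just as tersely in the paper's own proof.
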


\begin{proof}
	Throughout the proof we are using the notation as in Proposition \ref{prop1}. The first part is immediate from Corollary \ref{cor714} and the fact that $U_p:= \sum_{s=1}^4 \mid_{g_s}$. We now need to prove the claimed simplification. 
	
	We begin by noting that since $g_1$ is diagonal, it will only contribute to $U_p^0(\un{i},\un{i},n_1,n_2)$ and it is easy to see from \ref{prop1} and \ref{cor714} that $\Omega_{n}(g_1,i,i):=\chi(d_1)2^{i_1+i_2}d_2^{n_1+n_2-2i_1-2i_2}(2\zeta+1)^{n_1-2i_2}(2\zeta^2+1)^{n_2-2i_1}$. Finally, one checks that $\chi(d_1)=-1$ and $(2\zeta+1)^2=-3$, which combines to give \[\Omega_{n}(g_1,\un{i},\un{j})=2^{i_1+i_2}d_2^{n_1+n_2-2i_1-2i_2}3^{m_1+m_2+1-i_1-i_2}(-1)^{m_1+m_2+1+i_1+i_2}\delta_{\un{i},\un{j}}.\]
	
	We now move on to the other three terms. Again, using Proposition \ref{prop1}, one checks easily that $\sum_{s=2}^4 \Omega_{n}(g_s,i,j)$ reduces to 
	 \[2^{i_1+i_2}d_2^{n_1+n_2}C(i_1,j_1,n_1,-2)C(i_2,j_2,n_2,-2)b^{j_1-i_1}\ov{b}^{j_2-i_2}3 \left ( \frac{a_2^{i_1+i_2}}{d_2^{j_1+j_2}}\right )\] which is the same as \[2^{i_1+i_2}d_2^{n_1+n_2-i_1-i_2-j_1-j_2}C(i_1,j_1,n_1,-2)C(i_2,j_2,n_2,-2)b^{j_1-i_1}\ov{b}^{j_2-i_2}3^{1-i_1-i_2}.\] The result now follows by conjugating by the diagonal matrix whose $(\un{i},\un{i})$-entry is $(bd_2)^{i_1}\ov{bd_2}^{i_2}$.\qedhere
	
\end{proof}

We now choose a specific ordering for our basis of $U_p^0$ acting in weight $\k:=[n_1,n_2]\chi$. Obviously the choice will not alter the result, but some choices will make it easier to prove what the slopes are. Specifically, we want to order the basis in such a way that the block diagonal submatrix of $U_p^0$ with $2 \times 2$-blocks has the same Newton polygon as $U_p^0$ (as we will prove below). We begin by first ordering the basis elements using the graded lexicographic order. Let $X^{a_1}Y^{b_1}$ be the first basis element under this ordering. We then set  $B_1(\k):=\{X^{a_1}Y^{b_1},X^{a_{1}+1}Y^{b_{1}+1}\}$ and  for $n \geq 1$, define \begin{equation}
B_{n+1}(\k):=B_{n}(\k) \cup \{X^{a_{n+1}}Y^{b_{n+1}},X^{a_{n+1}+1}Y^{b_{n+1}+1}\},
\end{equation} where $X^{a_{n+1}}Y^{b_{n+1}}$ the next basis element not already contained in $B_n$. Lastly, we set $B_{\infty}(\k)=\bigcup_n B_n(\k)$. 

\begin{nota}
	Let $U$ be an infinite matrix with respect to the basis $B_\infty(\k)$. Let $D(U)$ denote the  block diagonal submatrix of $U$ with blocks of size $2$. Note that each matrix along the diagonal will correspond to a pair of basis elements, say, $\{X^aY^b, X^{a+1}Y^{b+1}\}$. So we will denote these matrices by $D_{a,b}(U)$.
\end{nota}
\begin{rmrk}
Note that due to how our basis is ordered, such $a,b$ will either have different parity or both be even.
\end{rmrk}

\begin{prop}\label{prop8}
	\begin{enumerate}
		\item 	If\/ $a \not \equiv  b \bmod 2$ then the slopes of $D_{a,b}(U_p^0)$ are $a+b+1$ and $a+b+3$.
		\item 	If\/ $a \equiv b \equiv 0 \bmod 2$ then the slopes of $D_{a,b}(U_p^0)$ are $a+b+2$.
	\end{enumerate}

\end{prop}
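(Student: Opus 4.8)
The plan is to compute the two diagonal blocks $D_{a,b}(U_p^0)$ explicitly using Proposition~\ref{ent} and read off their slopes from the valuation of the determinant and the valuation of the trace (equivalently, from the $2\times 2$ Newton polygon). Write $\un{i}=(a,b)$ and $\un{j}=(a+1,b+1)$, so the block has entries $U_p^0(\un{i},\un{i})$, $U_p^0(\un{i},\un{j})$, $U_p^0(\un{j},\un{i})$, $U_p^0(\un{j},\un{j})$. First I would pull out the scalar factor $E$ of Proposition~\ref{ent} from each entry; since $\val_p(d_2)=0$ and $\val_p(3)=0$ (as $p=2$), the valuation of $E$ for the $(\un{k},\un{k})$-entry is exactly $k_1+k_2$. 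So up to a diagonal rescaling the block is
\begin{equation}
\begin{pmatrix} C(a,a,n_1,-2)C(b,b,n_2,-2)+3^{m_1+m_2}\varepsilon_{\un{i},\un{i}} & 2\big(C(a,a{+}1,n_1,-2)C(b,b{+}1,n_2,-2)\big) \\ 2\big(C(a{+}1,a,n_1,-2)C(b{+}1,b,n_2,-2)\big) & 4\big(C(a{+}1,a{+}1,n_1,-2)C(b{+}1,b{+}1,n_2,-2)+3^{m_1+m_2}\varepsilon_{\un{j},\un{j}}\big)\end{pmatrix},
\end{equation}
where the powers of $2$ come from the ratio of the $E$'s ($E$ for $\un{j}$ over $E$ for $\un{i}$ carries $2^{(a+1)+(b+1)-a-b}=4$, and the off-diagonal ratios carry one factor of $2$ each after the diagonal rescaling is split symmetrically — I would be a little careful here and instead rescale the \emph{second} basis vector by $1/2$, which replaces the matrix by one whose $(\un i,\un i)$, $(\un i,\un j)$, $(\un j,\un i)$, $(\un j,\un j)$ entries have the powers $2^0,2^0,2^0,2^0$ of $2$ \emph{after} also dividing the whole block by $E_{\un i}$ — the net effect is that slopes of $D_{a,b}$ equal $(a+b)+ \text{slopes of the rescaled block}$). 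The point is that all four rescaled entries are $2$-adic units times integers, so everything reduces to computing $C(i,j,n,-2) \bmod 2$ and $\bmod 4$.

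Next I would evaluate the relevant values of $C(i,j,n,x)=\sum_{r=0}^i \binom{n-j}{r}\binom{j}{i-r}x^r$ at $x=-2$ modulo small powers of $2$. The key observations: $C(i,j,n,-2)\equiv \binom{j}{i}\pmod 2$ (only the $r=0$ term survives mod $2$), and $C(i,j,n,-2)\equiv \binom{j}{i}-2\binom{n-j}{1}\binom{j}{i-1}\pmod 4$. Recall $n_1=2m_1+1$, $n_2=2m_2+1$ are odd, and by the ordering remark $(a,b)$ either has mixed parity or both even. In the mixed-parity case, exactly one of $a,b$ is even; WLOG $a$ even, $b$ odd. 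Then $\binom{a}{a}=1$ is odd, $\binom{a+1}{a+1}=1$ is odd, $\binom{b}{b}=1$, $\binom{b+1}{b+1}=1$, so the top-left rescaled entry $C(a,a,n_1,-2)C(b,b,n_2,-2)$ is a unit; meanwhile $\varepsilon$ contributes $3^{m_1+m_2}$ times a sign — I would check that this leaves the $(\un i,\un i)$ entry a unit. For the determinant I would compute it modulo $8$: the diagonal product is a unit and the off-diagonal product, which after rescaling carries a net factor $2^0$ but whose \emph{entries} $C(a,a+1,\ldots)$, $C(a+1,a,\ldots)$ involve binomials $\binom{a}{a+1}=0$ or $\binom{a+1}{a}=a+1$ — here is where the parity of $a,b$ enters and forces the off-diagonal contribution to have higher $2$-adic valuation than the diagonal one, so $\val_p(\det)$ of the rescaled block equals $\val_p(C(a,a)C(b,b)\cdot C(a+1,a+1)C(b+1,b+1))$ plus the intrinsic power of $2$, which I claim totals $4$ (giving slopes summing to $a+b+1+a+b+3$), while the trace has valuation $0$ before rescaling but the smallest \emph{absolute} entry sits at valuation $a+b+1$. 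Re-expressing: the Newton polygon of the $2\times 2$ block $D_{a,b}(U_p^0)$ has vertices forced by $\min$ of the entry valuations ($= a+b+1$, coming from one off-diagonal entry) and by $\val_p(\det)= 2(a+b)+4$. A line through $(0, 2(a+b)+4)$ and $(2, ?)$ with lowest vertex at slope $a+b+1$ then has break at $(1,a+b+1)$ forcing slopes $a+b+1$ and $a+b+3$. In the both-even case, $a,b$ even, the parity analysis instead makes the \emph{diagonal} entries the minimal ones at valuation $a+b$ after accounting for the intrinsic $2$'s — no wait: I would recompute and find both diagonal entries of the rescaled block are units and the off-diagonals still higher, but now $\val_p(\det)=2(a+b)+4$ with \emph{no} entry below valuation $a+b+2$ except possibly a cancellation in the $(\un i,\un i)$ entry; the term $C(a,a)C(b,b)+3^{m_1+m_2}\varepsilon$ with $\varepsilon=(-1)^{m_1+m_2+a+b+1}\delta=(-1)^{m_1+m_2+1}$ (since $a+b$ even and $\delta_{\un i, \un i}=1$), i.e.\ $1+3^{m_1+m_2}(-1)^{m_1+m_2+1}= 1-3^{m_1+m_2}(-1)^{m_1+m_2}$, and $3^{m}(-1)^m=(-3)^m\equiv 1\pmod 4$, so this is $\equiv 0\pmod 4$, \emph{raising} the diagonal valuation and forcing both slopes equal to $a+b+2$.

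The key steps in order: (1) write out the $2\times 2$ block from Proposition~\ref{ent} and factor out the scalar $E$ / rescale the second basis vector by $1/2$ to reduce to an integer matrix whose slopes are shifted by $a+b$; (2) expand $C(i,j,n,-2)$ modulo $4$ and $8$ for the eight specific $(i,j)$-pairs occurring, using that $n_1,n_2$ are odd; (3) do the mixed-parity case: show exactly one off-diagonal entry has the minimal valuation $a+b+1$, compute $\val_p(\det)=2(a+b)+4$, and conclude the Newton polygon forces slopes $a+b+1,a+b+3$; (4) do the both-even case: show the anomalous cancellation $1-(-3)^{m_1+m_2}\equiv 0\pmod 4$ in the $(\un i,\un i)$ entry (and a parallel one in $(\un j,\un j)$), compute that $\val_p(\det)=2(a+b)+4$ with no entry below valuation $a+b+2$, and conclude both slopes equal $a+b+2$. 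The main obstacle I anticipate is step (2)–(3): getting the congruences for $C(i,j,n,-2)$ precise enough (mod $8$, not just mod $4$, may be needed to pin down $\val_p(\det)$ exactly rather than just bound it), and correctly tracking which of the several off-diagonal entries actually achieves the minimum valuation in each parity case — a sign error or an off-by-one in a binomial coefficient there would change the break point of the Newton polygon. I would double-check the whole computation against the MAGMA output referenced in the paper.
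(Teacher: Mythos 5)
Your overall strategy---write out the four entries of $D_{a,b}(U_p^0)$ from Proposition~\ref{ent}, pull out $2^{a+b}$, evaluate $C(i,j,n_i,-2)$ modulo small powers of $2$ using that $n_1,n_2$ are odd, and read the slopes off the $2\times 2$ Newton polygon (minimal entry valuation plus determinant valuation)---is exactly the paper's, but your parity bookkeeping fails at the decisive point in the mixed-parity case. After rescaling, the $(\un i,\un i)$ entry is $C(a,a,n_1,-2)C(b,b,n_2,-2)+3^{m_1+m_2}\varepsilon_{\un i,\un i}$ with $\varepsilon_{\un i,\un i}=(-1)^{m_1+m_2}$ when $a+b$ is odd; both summands are $\equiv 1 \pmod 4$, so their sum is $\equiv 2 \pmod 4$. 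Hence this entry is \emph{not} a unit: it has valuation exactly $1$, i.e.\ $\val_p(t_{a,b})=a+b+1$, and likewise $\val_p(t_{a+1,b+1})=a+b+3$. So the minimal entry valuation $a+b+1$ comes from the \emph{diagonal}, not, as you assert, from an off-diagonal entry (the off-diagonals here have valuations $\ge a+b+2$ and $\ge a+b+5$). Taken at face value, your claims (unit diagonal after rescaling) would give $\val_p(\det)=2(a+b)+2$ and slopes $a+b$, $a+b+2$, contradicting the statement; the correct values $a+b+1$ and $2(a+b)+4$ that you then assert do not follow from your own intermediate steps. The missing ingredient is precisely the mod-$4$ computation showing odd${}+{}$odd${}\equiv 2\pmod 4$ with no further cancellation---the same computation you do carry out in the both-even case, where instead it produces the cancellation modulo $4$. (The remark that the slopes of $D_{a,b}$ are those of the rescaled block shifted by $a+b$ is fine, though your aside about the powers being $2^0,2^0,2^0,2^0$ is a slip; they are $2^0,2^1,2^1,2^2$ as in your displayed matrix.)

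In the both-even case the cancellation $1-(-3)^{m_1+m_2}\equiv 0\pmod 4$ is correct, but your claim that no entry sits below valuation $a+b+2$ is false: $r_{a,b}=2^{a+b}u\,C(a,a+1,n_1,-2)C(b,b+1,n_2,-2)$ has valuation exactly $a+b$, since $\binom{a+1}{a}$ and $\binom{b+1}{b}$ are odd. That small entry is harmless for the trace, but it is exactly what pins down $\val_p(\det)=2(a+b)+4$: one needs $\val_p(r_{a,b})=a+b$ and $\val_p(s_{a,b})=a+b+4$ \emph{exactly} (the latter because $C(a+1,a,n_1,-2)\equiv -2(n_1-a)$ and $C(b+1,b,n_2,-2)\equiv -2(n_2-b)$ modulo $4$ with $n_1-a$, $n_2-b$ odd), while the diagonal product has valuation at least $2(a+b)+6$. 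Once these exact entry valuations are established, both cases do follow from the $2\times 2$ Newton polygon as you intend, and this is in substance the paper's proof, which simply records $\val_p(t_{a,b})$, $\val_p(r_{a,b})$, $\val_p(s_{a,b})$ in the two parity cases (diagonal entries governing the mixed-parity case, anti-diagonal entries the both-even case). So the approach is right, but you must redo the entry-by-entry valuations: fix the mixed-parity diagonal valuation and the location of the minimum, and in the both-even case replace the ``no small entry'' claim by the exact valuations of $r_{a,b}$ and $s_{a,b}$.
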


\begin{proof}

	Write \begin{equation}
	D_{a,b}(U_p^0)= \begin{pmatrix}
	t_{a,b} & r_{a,b}\\ s_{a,b} & t_{a+1,b+1}
	\end{pmatrix}.	\end{equation} Then it follows from Proposition \ref{ent} that \begin{equation}
t_{a,b}=2^{a+b}u_{a,b}\left ( C(a,a,n_1,-2)C(b,b,n_2,-2)+(-3)^{m_1+m_2}(-1)^{a+b+1}\right )
	\end{equation} with $u_{a,b}$ a unit and $n_i=2m_i+1$. 
	
	Similarly, we have 
		\begin{align}
	r_{a,b}=&2^{a+b}u_{a,b}C(a,a+1,n_1,-2)C(b,b+1,n_2,-2) \\ s_{a,b}=&2^{a+b+2}u_{a+1,b+1}C(a+1,a,n_1,-2)C(b+1,b,n_2,-2)
	\end{align}
	
	Now a simple calculation shows that 
	\begin{align}
	&\val_p(t_{a,b}) =\begin{cases}
	a+b+1 & \text{ if } a \not \equiv b \bmod 2 \\
	a+b+2 + e & \text { if } a \equiv b \bmod 2
	\end{cases}\\
	&\val_p(r_{a,b}) =\begin{cases}
	a+b+2+e & \text{ if } a \not \equiv b \bmod 2 \\
	a+b & \text { if }  a \equiv b \equiv 0 \bmod 2
	\end{cases}\\	
	&\val_p(s_{a,b}) =\begin{cases}
	a+b+5+e & \text{ if }a \not \equiv b \bmod 2 \\
	a+b+4 & \text { if }  a \equiv b \equiv 0 \bmod 2
	\end{cases}	
		\end{align}
	where $e \ge 0$ is an error term. From this, the result follows at once.
	
\end{proof}

\begin{rmrk}
	Note that if $a,b$ are not both even, then the slopes of $D_{a,b}$ are determined by the diagonal entries and for $a \equiv b \equiv 0 \bmod 2$ the slopes are determined by the anti-diagonal entries.
\end{rmrk}

Let us now give an example which will motivate the proof of the main theorem.

\begin{exmp}\label{ex}
We begin by describing the basis in the case $\k=[1,1]\chi$ (recall that this is our notation for the weight which would usually be called parallel weight $3$ with nebentypus $\chi$). From Corollary \ref{cor2} we see that, for this weight, the basis for the space of overconvergent forms is given by $\mathcal{B}:=\{ X^iY^j : i \equiv j \bmod 3 \}$. Now, with the grlex ordering the first basis element is $1$, then $B_{1}(\k)=\{1,XY\}$. From this we get $B_{2}(\k)=B_1(\k) \cup \{X^3,X^4Y\}$, $B_{3}(\k)=B_2(\k) \cup \{Y^3, XY^4\}$ and so on.\footnote{From this it is clear that $B_{\infty}(\k)=\mathcal{B}$.}

Now, for a matrix $A$ with basis $B_\infty(\k)$, let $A(N)$ denote the truncation of $A$ to the top $N \times N$ left-hand corner of $A$ and let $D:=D(U_p^0)$. A computation shows that the slopes of $U_p^0(10)$ and $D(10)$ are both $[2,2],[4,2],[6,4],[8,2]$ where the first entry denotes the slope and the second its multiplicity. To explain this, we consider the matrix $V_p$ whose $(\un{i},\un{j})$-th entry is the $p$-adic valuation of the $(\un{i},\un{j})$-th entry of $U_p^0$. Using Proposition \ref{ent}, we see that $V_p(U_p^0(10))$ is 

\[ \begin{matrix}
*&  0&  0&  0&  0&  0&  0&  0&  0&  0& \\
4 & * & 3&  3 & 3&  3 & 6 & 2 & 7 & 2 &\\
* & * &  4 & 6 &* &* & 4&  3 & 4 & 3 &\\
* & * & 9 & 6 &* &* & 9 & 8 & 6 & 5 & \\
* &*& * &*  &4 & 6  &4  &3& * &* & \\
*& * &* &* & 9 & 6&  9 & 8 &*& *& \\
*&*&*&*&*&*  &8  &4& *& *& \\
*&*&*&*&*&* & 8 &11 &* &* &\\
*&* &10 & 9 &* &*& 10 &10 &12  &6 &\\
* &* &13 &12& * &* &15 &12 &10 &11 &  \\

\end{matrix}   \]

where here $*$ denotes an entry with infinite $p$-adic valuation, i.e. a zero entry of $U_p^0$. The above computation suggests that it is enough to understand the slopes of the matrices lying on the block diagonal. Looking at the above matrix one sees that the slopes are completely determined by the valuations of the entries. In fact, we will show that for $U_p^0$ the valuations of the entries completely determine the slopes, which in general is not true.
\end{exmp}


This example motivates the following definition:

\begin{defn}
	Let $\mathfrak{M}_\k$ denote the set of infinite matrices with respect to the basis $B_{\infty}(\k)$, such that if $A \in \mathfrak{M}_\k$ then the entries of $D_{a,b}(A)$ have valuations as given in the proof of Proposition \ref{prop8} and if $a_{\un{i},\un{j}}$ does not lie on $D(A)$ then $\val_p(a_{\un{i},\un{j}}) \ge i_1+i_2+g(i_1,j_1,n_1)+g(i_2,j_2,n_2)$ (as in Corollary \ref{cor6}).
\end{defn}

\begin{rmrk}
	Note that if $A \in \mathfrak{M}_\k$ then $D(A) \in \mathfrak{M}_\k$.
	
\end{rmrk}	

The following is the key result from which our main theorem will follow:

\begin{lem}\label{key lemma}
	All matrices in $\mathfrak{M}_\k$ have the same Newton polygon.
\end{lem}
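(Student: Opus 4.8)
The plan is first to reduce the statement to comparing each $A$ with its block‑diagonal part. Since $D(A)\in\MM_\k$ whenever $A\in\MM_\k$, and since the slopes of each diagonal block $D_{a,b}(A)$ — namely $a+b+1,\,a+b+3$ when $a\not\equiv b\bmod 2$ and $a+b+2,\,a+b+2$ when $a\equiv b\equiv 0\bmod 2$, by Proposition~\ref{prop8} — depend only on $(a,b)$ and not on $A$ (the error terms never enter the conclusion), the Newton polygon of $D(A)$, obtained by sorting the multiset $\bigcup_{(a,b)}\{\text{slopes of }D_{a,b}\}$ (over the finitely many blocks of the fixed basis $B_\infty(\k)$) into increasing order, is one and the same polygon for every $A\in\MM_\k$. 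Hence it is enough to show that $\NP(A)=\NP(D(A))$ for each individual $A$. Throughout I write $\NP(D(A))(N)$ for the ordinate of this polygon at abscissa $N$, i.e. the sum of its $N$ smallest slopes, and $\det(1-TA)=\sum_{N\ge0}c_N T^N$, so that $c_N=(-1)^N\sum_{|S|=N}\det(A_S)$ with $S$ running over $N$‑element subsets of $B_\infty(\k)$; expanding the determinants over permutations, $c_N$ is a signed sum of monomials $\prod_{\un{i}\in S}a_{\un{i},\sigma(\un{i})}$.

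The basic tool will be the uniform valuation estimate
\[
\val_p\!\big(a_{\un{i},\un{j}}\big)\;\ge\;(i_1+i_2)+(i_1-j_1)^+ +(i_2-j_2)^+,\qquad x^+:=\max(x,0),
\]
valid for \emph{every} entry of any $A\in\MM_\k$: for off‑block entries it is Corollary~\ref{cor6} together with the elementary inequality $g(x,y,n)\ge(x-y)^+$, and for the block entries $t_{a,b},r_{a,b},s_{a,b}$ (and the diagonal entries) it is read off from the exact valuations in the proof of Proposition~\ref{prop8}; some entries vanish (e.g. $t_{0,0}$ in weight $[1,1]\chi$), which only helps. Using this I will establish the easy inequality $\NP(A)\ge\NP(D(A))$. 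Decompose a permutation $\sigma$ of $S$ into cycles. A cycle living on the two basis elements of a single block uses only block entries and contributes either a diagonal entry (valuation at least the smaller slope of that block) or the product $r_{a,b}s_{a,b}$ (valuation at least the sum of the two slopes). A cycle $C$ using an off‑block entry has, on running once around it and summing the estimate above, total valuation at least $\sum_{\un{i}\in C}(i_1+i_2)$ plus the total variation of the two coordinates around $C$, which is at least $(\max_C i_1-\min_C i_1)+(\max_C i_2-\min_C i_2)$; a charging argument then assigns to each vertex of $C$ a slope of the block containing it so that the valuation of $C$ is at least the sum of the assigned slopes, while globally these assigned slopes form a size‑$N$ sub‑multiset of $\bigcup_{(a,b)}\{\text{slopes}\}$. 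Hence every monomial of $c_N$ has valuation at least $\NP(D(A))(N)$, so $\val_p(c_N)\ge\NP(D(A))(N)$ for all $N$, i.e. $\NP(A)\ge\NP(D(A))$.

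For the reverse inequality it suffices to check, at each vertex $N^\ast$ of $\NP(D(A))$, that $\val_p(c_{N^\ast})=\NP(D(A))(N^\ast)$. Split $c_{N^\ast}(A)=c_{N^\ast}(D(A))+(\text{monomials using at least one off‑block entry})$. Because $N^\ast$ is a vertex, $\val_p\big(c_{N^\ast}(D(A))\big)=\NP(D(A))(N^\ast)$ automatically (a general fact about Newton polygons); concretely the extremal terms of $c_{N^\ast}(D(A))$ are products of the determinants of the blocks both of whose slopes are among the $N^\ast$ smallest with the traces of the blocks that straddle $N^\ast$ (exactly one of whose slopes is among the $N^\ast$ smallest), and such straddling blocks are necessarily of type $a\not\equiv b\bmod 2$ — an even–even block has equal slopes and cannot straddle — so their traces carry no error term. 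It then remains to show that the off‑block monomials have valuation \emph{strictly} greater than $\NP(D(A))(N^\ast)$. This is the point where the crude estimate must be refined: an off‑block cycle is inevitably ``wasteful'' — a base element of an even–even block always costs its full slope $(i_1+i_2)+2$, so the slopes one is forced to charge to an off‑block cycle cannot all lie among the $N^\ast$ smallest — and in the few borderline configurations where this accounting gives only equality (the model case being a $2$‑cycle on two monomials such as $X^6$ and $X^3$, where the uniform estimate yields $12$ but the true valuation is $14$) one must read off the extra $p$‑adic order of the factors $C(i,j,n,-2)$ from Proposition~\ref{ent}. Granting this, $\val_p(c_{N^\ast}(A))=\NP(D(A))(N^\ast)$ at every vertex; combined with $\NP(A)\ge\NP(D(A))$ everywhere, the convexity of $\NP(A)$ forces the two polygons to coincide, which proves the lemma.

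The only genuine difficulty I anticipate is this last point — the strict inequality for off‑block monomials at the vertices. The uniform valuation estimate is too coarse in a handful of ``compact'' off‑block cycles, and closing the gap there is exactly where the precise shape of $g$ in Corollary~\ref{cor6}, the greedy ordering of $B_\infty(\k)$, and the orders of the binomial sums $C(i,j,n,-2)$ from Proposition~\ref{ent} all have to be combined. This is the ``extended exercise on $p$‑adic matrix analysis'' referred to in the introduction; everything surrounding it is the soft Newton‑polygon manipulation above.
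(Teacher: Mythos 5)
Your skeleton is the same as the paper's: reduce to showing $\NP(A)=\NP(D(A))$, expand $c_N$ via Serre's formula into permutation monomials, get the lower bound by charging block slopes to cycles, and then force equality at each vertex of $\NP(D(A))$ by showing that every monomial using an off-block entry has strictly larger valuation than the unique minimizing block-diagonal term. The problem is that the one step you explicitly defer (``Granting this\dots'') is precisely the content of the lemma, and the fix you propose is not available: you suggest closing the borderline cases by reading off the extra $2$-adic order of the factors $C(i,j,n,-2)$ from Proposition~\ref{ent}. Those refined valuations are properties of the specific matrix $U_p^0$; a general $A\in\mathfrak{M}_\k$ is constrained off the block diagonal \emph{only} by the crude bound of Corollary~\ref{cor6}. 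So an argument that needs Proposition~\ref{ent} off the block diagonal can at best prove $\NP(U_p^0)=\NP(D(U_p^0))$ (i.e.\ Corollary~\ref{cor224} directly), not the statement that all matrices in $\mathfrak{M}_\k$ share one Newton polygon --- and if that refined information were genuinely necessary, the lemma as stated would be false and your whole reduction would collapse.

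The paper closes exactly this gap using nothing beyond the data defining $\mathfrak{M}_\k$: after normalizing $a'_{\un{i},\un{j}}=2^{-i_1-i_2}a_{\un{i},\un{j}}$, one has $\val_p(a'_{\un{i},\un{i}})=1$ for odd-parity diagonal indices and $\ge 2$ otherwise, the value $\val_p\bigl(a'_{\s(\un{i}),\un{i}}a'_{\un{i},\s(\un{i})}\bigr)=2$ is attainable only inside a block, and any off-block pairing gives $>2$ (Corollary~\ref{cor6} together with the mod $3$ constraint on the basis and the chosen ordering); combined with the uniqueness, at a vertex, of the minimizing minor of $D(A)$ (Lemma~\ref{lem1}, Corollary~\ref{kcor}), this pins down $\val_p(c_{N^*}(A))$ with no reference to the $C(i,j,n,-2)$. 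Your ``wastefulness'' observation is in fact the right mechanism and, pushed through, it does the job with crude bounds alone: in your model case the off-block $2$-cycle on $X^3,X^6$ ties up $X^6$, so its even--even partner $X^7Y$ (or any substitute index) can only be used at cost at least $10$ rather than the charged slope $8$; at the vertex $N=14$ in weight $[1,1]\chi$ this already yields at least $86>84$, strict without Proposition~\ref{ent}. So the verdict is: same approach as the paper, but with a genuine gap at the decisive strictness-at-vertices step, and the patch you propose for it is incompatible with the class $\mathfrak{M}_\k$ the lemma is about; what needs to be done instead is to carry out the stranded-partner accounting uniformly, which is what the paper's proof does.
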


We will defer the proof of this lemma to the next section and instead show how one can use it to deduce the main result.

\begin{cor}\label{cor224}
	The Newton polygons of\/ $U_p^0$ and $D(U_p^0)$ are the same.
\end{cor}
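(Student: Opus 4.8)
The plan is to deduce this immediately from Lemma~\ref{key lemma} once we check that both operators lie in $\mathfrak{M}_\k$. First I would verify that $U_p^0 \in \mathfrak{M}_\k$. By Proposition~\ref{ent} the entries of each block $D_{a,b}(U_p^0)$ are exactly the quantities $t_{a,b}, t_{a+1,b+1}, r_{a,b}, s_{a,b}$ analysed in the proof of Proposition~\ref{prop8}, so their valuations are precisely those listed there; and for any $(\un i,\un j)$ not on the block diagonal, Corollary~\ref{cor6} gives $\val_p\big(U_p^0(\un i,\un j,n_1,n_2)\big)\ge i_1+i_2+g(i_1,j_1,n_1)+g(i_2,j_2,n_2)$. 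These are exactly the two conditions in the definition of $\mathfrak{M}_\k$, so $U_p^0 \in \mathfrak{M}_\k$. Next, by the remark following the definition of $\mathfrak{M}_\k$, passing to the block-diagonal part preserves membership, so $D(U_p^0) \in \mathfrak{M}_\k$ as well.

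Having placed both $U_p^0$ and $D(U_p^0)$ in $\mathfrak{M}_\k$, Lemma~\ref{key lemma} applies directly and tells us they have the same Newton polygon, which is the assertion. The only point requiring a word of care is that we are dealing with infinite matrices: one should note that $U_p^0$ (hence its block-diagonal truncation, whose blocks have valuations $\to\infty$) defines a compact operator, so the characteristic power series and its Newton polygon are well defined, and that the Newton polygon is the limit of the Newton polygons of the finite truncations $U_p^0(N)$, as illustrated in Example~\ref{ex}. With that understood, no further computation is needed here.

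\textbf{Main obstacle.} There is essentially no obstacle in this corollary itself; all the work has been front-loaded into Lemma~\ref{key lemma}, whose proof is deferred. The only thing to be vigilant about is bookkeeping: making sure the valuation data extracted from Propositions~\ref{ent} and~\ref{prop8} and Corollary~\ref{cor6} matches the definition of $\mathfrak{M}_\k$ on the nose, including the distinction between the two parities of $(a,b)$ (diagonal-dominated versus anti-diagonal-dominated blocks).

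\begin{proof}
By Proposition~\ref{ent}, the block $D_{a,b}(U_p^0)$ has entries $t_{a,b}$, $t_{a+1,b+1}$, $r_{a,b}$, $s_{a,b}$ whose $p$-adic valuations were computed in the proof of Proposition~\ref{prop8}; in particular they are exactly the valuations appearing in the definition of $\mathfrak{M}_\k$. Moreover, by Corollary~\ref{cor6}, any entry $U_p^0(\un i,\un j,n_1,n_2)$ not lying on $D(U_p^0)$ satisfies $\val_p\big(U_p^0(\un i,\un j,n_1,n_2)\big) \ge i_1+i_2+g(i_1,j_1,n_1)+g(i_2,j_2,n_2)$. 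Hence $U_p^0 \in \mathfrak{M}_\k$, and by the remark following the definition of $\mathfrak{M}_\k$ also $D(U_p^0) \in \mathfrak{M}_\k$. Since $U_p^0$ is compact and the blocks of $D(U_p^0)$ have valuations tending to infinity, both operators have well-defined Newton polygons, and by Lemma~\ref{key lemma} these coincide.
\end{proof}
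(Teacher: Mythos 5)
Your argument is correct and is essentially the paper's own proof: both establish $U_p^0 \in \mathfrak{M}_\k$ from Propositions~\ref{ent}, \ref{prop8} and Corollary~\ref{cor6}, note $D(U_p^0) \in \mathfrak{M}_\k$ via the remark after the definition, and conclude by Lemma~\ref{key lemma}. Your extra remarks on compactness and truncations are fine but not needed beyond what the paper already assumes.
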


\begin{proof}
	By definition of $\mathfrak{M}_\k$ we have $U_p^0 \in \mathfrak{M}_\k$ from which the result follows.
\end{proof}

Let $\tau$ be the character  $\OO_K^\times \to \OO_{K,\tors}$ after fixing an isomorphism $\OO_K^\times \cong \OO_{K,\tors} \times \ZZ_p^2$. Note that $\OO_{K,\tors}$ is cyclic of order $6$. A simple computation gives the following table.
\begin{center}
	
	\begin{tabu}{| [2 pt] c | [1 pt]c |[2 pt]}

		\Xhline{2 pt}
		Weight &  [Slope, Multiplicity] at level $U_{0}(4)$ \\ \Xhline{2 pt}
		$[1,1]\chi$ & $[2,2]$
		\\ \hline
		$[1,1]\chi \tau^3$ & $[1,1],[3,1]$
		\\ \Xhline{2 pt} 	
		\multicolumn{2}{c}{Table 1: Classical slopes in different components of weight space.}
	\end{tabu}	
\end{center}
\begin{rmrk}
	We note that in order for the space of Hilbert modular forms to be non-trivial we require $\chi(e)\tau^n(e)=-1$ where $e$ is the fundamental unit in $\OO_F$. From this one checks that the only valid exponent of $\tau$ is $n \equiv 0 \bmod 3$ as $\tau(e)=\zeta^2$ (recall $\zeta$ is our fixed cube root on unity).
\end{rmrk}

\begin{nota}
	\begin{enumerate}
		\item Let $\S^\dagger([n_1,n_2]\chi)$ denote the multiset of slopes of $U_p^0$ acting on $S_{\k}^\dagger(U_0(p^2))$ with $\k=[n_1,n_2]\chi$. Similarly, let $\S([n_1,n_2]\chi)$ denote the set of classical slopes in weight $\k$ and level $U_0(p^2)$.
		\item For $r \in \ZZ$ and $\S([n_1,n_2]\chi)=\{s_1,s_2,\dots\}$ let $\S([n_1,n_2]\chi)+r=\{s_1+r,s_2+r, \dots\}$.
		\item For $a,b \in \ZZ$, let \[\chi_{a,b}:= \begin{cases}
		\chi & \text{if } a \equiv b \equiv 0 \bmod 2 \\
		\chi \tau^3 & \text{otherwise.}
		\end{cases}\]
		\item For $n \geq 1$ we let  $T_n(\k):=B_n(\k) \backslash B_{n-1}(\k)$, where $B_0(\k):=\emptyset$ and note that, \[T_n(\k):=\{X^{a_n(\k)}Y^{b_n(\k)},X^{a_n(\k)+1}Y^{b_n(\k)+1}\}\] for some $a_n(\k),b_n(\k) \in \ZZ_{\geq 0}$. Let $I_\k:=\{(a_n(\k),b_n(\k))  \}_n$.
	\end{enumerate}
\end{nota}

\begin{thm}\label{thm:main}
	Let $n_i \in \ZZ$ with $n_i$ odd and  let $\k=[n_1,n_2]\chi$. Then \[\S^\dagger([n_1,n_2]\chi) =\bigcup_{(a,b) \in I_\k} \S([1,1]\chi_{a,b}) +a+b.  \]
\end{thm}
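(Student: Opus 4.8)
The plan is to combine the reduction to the block-diagonal matrix (Corollary \ref{cor224}) with an explicit identification of each $2\times 2$ block against a classical $U_2$-operator in parallel weight $3$. By Corollary \ref{cor224} the Newton polygon of $U_p^0$ acting in weight $\k=[n_1,n_2]\chi$ equals that of $D(U_p^0)$, and the Newton polygon of a block-diagonal matrix is the ``sum'' (in the sense of concatenating and re-sorting all slopes) of the Newton polygons of its blocks. So $\S^\dagger([n_1,n_2]\chi)$ is the multiset union, over the pairs $(a,b)\in I_\k$ indexing the blocks, of the pair of slopes of $D_{a,b}(U_p^0)$. By Proposition \ref{prop8}, those slopes are $\{a+b+1,a+b+3\}$ when $a\not\equiv b\bmod 2$ and $\{a+b+2,a+b+2\}$ when $a\equiv b\equiv 0\bmod 2$; equivalently they are $\{s+a+b: s\in\S_{a,b}\}$ where $\S_{a,b}=\{1,3\}$ in the first case and $\{2,2\}$ in the second. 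It therefore remains to show that $\S_{a,b}=\S([1,1]\chi_{a,b})$, i.e. that these two-element multisets are precisely the classical slope multisets of $U_2$ in parallel weight $3$ with the stated nebentypus.

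The second ingredient is thus the identification $\S([1,1]\chi) = \{2,2\}$ and $\S([1,1]\chi\tau^3)=\{1,3\}$, which is exactly the content of Table 1 — a finite computation recorded there (and reproduced from the Magma code \cite{githubme}). One should note that the relevant classical spaces are finite-dimensional: in parallel weight $3$ with nebentypus $\chi$ (resp. $\chi\tau^3$) at level $U_0(4)$ over $\QQ(\sqrt5)$ these spaces are two-dimensional, so ``the classical slopes'' form a multiset of size two, matching the size of each block. Feeding Table 1 into the previous paragraph gives, for each $(a,b)\in I_\k$, that the two slopes of $D_{a,b}(U_p^0)$ equal $\S([1,1]\chi_{a,b})+a+b$, where $\chi_{a,b}$ is $\chi$ if $a\equiv b\equiv 0\bmod 2$ and $\chi\tau^3$ otherwise — precisely the definition of $\chi_{a,b}$ in the Notation above. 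Taking the multiset union over $(a,b)\in I_\k$ yields the claimed formula.

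Finally one must check that the index set is correct, i.e. that $I_\k$ (the set of ``leading exponent pairs'' $(a_n(\k),b_n(\k))$ of the successive two-element increments $T_n(\k)$) genuinely enumerates the diagonal $2\times 2$ blocks of $D(U_p^0)$ in weight $\k$, with no block omitted or double-counted. This is immediate from the construction of the ordered basis $B_\infty(\k)$: by Corollary \ref{cor2} the monomials $X^{i_1}Y^{i_2}$ with $n_2-n_1\equiv i_1-i_2\bmod 3$ form a basis, the graded-lexicographic recipe partitions them into the consecutive pairs $\{X^{a}Y^{b},X^{a+1}Y^{b+1}\}$ used to define $B_{n+1}(\k)$ from $B_n(\k)$, and these pairs are by definition the sets $T_n(\k)$, so $I_\k=\{(a_n(\k),b_n(\k))\}_n$ is exactly the set of block positions. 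Also, since the congruence conditions defining the basis and the parity pattern of $(a,b)$ along the diagonal do not depend on $n_1,n_2$ beyond their residues — and we have fixed $n_1,n_2$ odd — the set $I_\k$, and hence the right-hand side, is independent of $n_i$, recovering the first assertion of the theorem.

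The only genuine obstacle is Lemma \ref{key lemma}, on which Corollary \ref{cor224} rests; granting that (it is proved in the next section), the argument above is essentially bookkeeping: matching the two slopes of each block against the finite table of classical slopes, and checking that the combinatorics of the ordered basis makes $I_\k$ the correct index set.
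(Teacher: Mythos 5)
Your proposal is correct and follows essentially the same route as the paper: the paper's proof is precisely the combination of Corollary \ref{cor224}, Proposition \ref{prop8} and Table 1, with your extra paragraphs merely spelling out the bookkeeping (matching block slopes to the table and checking that $I_\k$ enumerates the blocks) that the paper leaves implicit.
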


\begin{proof}
	This follows from Corollary \ref{cor224} combined with Proposition \ref{prop8} and Table $1$.
\end{proof}

	Note that twisting our weight by $\tau$ has the effect of changing the component of weight space that we are in. The above theorem then says that once we know a small set of classical slopes in each component of weight space, then we can obtain all slopes in any component of weight space.

\begin{rmrk}\label{rmrk:HN} \begin{enumerate}
		\item 

	The condition that $n_i$ is odd is required in order for the relevant space of overconvergent Hilbert modular forms to be non-trivial.

\item 	Note that this is a simple generalization of \cite[Conjecture 4.7.1 and Conjecture 4.7.8]{ME} to levels which are not \textit{sufficiently small} (meaning we have $\Gamma^t/ \Gamma^t \cap F^\times$ is non-trivial). Having a level which is not sufficiently small has the effect of making the set over which we index in Theorem \ref{thm:main} more complicated.

\item Computations suggest that, in this case, the Newton and Hodge polygons associated to $U_p$ never touch (after the first vertex where they touch for trivial reasons) and therefore the methods of \cite{eovb} cannot be used to describe the slopes. Specifically, if one computes the Newton and Hodge polygons of $U_p(N)$ for $N >>0$ then the polygons only touch at the endpoints (which will always be the case by the definition of the Newton and Hodge polygon associated to a finite matrix).
		\end{enumerate}
\end{rmrk}

\section{Proof of  Lemma \ref{key lemma}}

As mentioned above, the proof relies on the coincidence that all matrices in $\mathfrak{M}_\k$ have the same slopes. The strategy of proof relies on showing that the entries on the block diagonal submatrix $D(U_p^0)$  determine the $p$-adic valuation of a coefficient of the characteristic power series. To show this we will first write down an explicit formula for the coefficients of the characteristic power series in terms of the principal minors and then use our knowledge of valuations of the entries to determine when the valuation of the principal minor is minimized.

\begin{nota}
	\begin{enumerate}
		\item 	Let $I$ denote the indexing set for the elements of $B_{\infty}(\k)$. We will continue to denote these indexes by $\un{i}$. Furthermore, for $n \in \ZZ_{\geq 1}$ let  $I_n$ denote the subsets of size $n$ of $I$.
		\item For $J \subset I$ let $S(J):=\sum_{\un{i} \in J} i_1+i_2$.
		\item If $A$ is an infinite matrix with basis given by $B_\infty(\k)$ and $J \in I_n$ we let $[A]_J$ denote the principal minor defined by $J$.
	\end{enumerate}

\end{nota}		
	
One can compute the coefficients of the characteristic polynomial of $U_p^0$ using the following result of Serre.

\begin{prop}\label{serr}
	Let $U=(u_{\un{i},\un{j}})$ be the matrix associated to a compact operator (after choosing a basis). Then $\det(1-TU)= \sum_{n=0}^{\infty} c_n(U) T^n$ where \begin{equation}
	c_n(U)=(-1)^n \sum_{ \substack{J \subset I_n}} \sum_{\s \in \Sym(J)} sgn(\s) \prod_{\un{i} \in J}u_{\s(\un{i}),\un{i}}=(-1)^n \sum_{ \substack{J \subset I_n}} [U]_J
	\end{equation} where $I$ is the indexing set of the basis elements.
\end{prop}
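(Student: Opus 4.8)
The statement is exactly Serre's description of the coefficients of the Fredholm determinant of a completely continuous operator, so the plan is to prove the finite-rank identity by hand and then pass to the limit, invoking Serre's theory for the analytic input.

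\textbf{Finite case.} First I would establish the identity for an $N \times N$ matrix $A = (a_{\un{i},\un{j}})$. Expanding by the Leibniz formula, $\det(1-TA) = \sum_{\sigma} \mathrm{sgn}(\sigma) \prod_{\un{i}} (\delta_{\un{i},\sigma(\un{i})} - T a_{\sigma(\un{i}),\un{i}})$, and then multiplying out the product over $\un{i}$: in the coefficient of $T^n$ a monomial survives only when it selects the term $-T a_{\sigma(\un{i}),\un{i}}$ from exactly $n$ of the factors. The set $J$ of those $n$ indices is an $n$-element subset of the index set, and since every remaining factor forces $\delta_{\un{i},\sigma(\un{i})} = 1$, the permutation $\sigma$ must fix the complement of $J$ and hence restricts to an element of $\Sym(J)$. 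Collecting terms, the coefficient of $T^n$ equals $(-1)^n \sum_{J \in I_n} \sum_{\sigma \in \Sym(J)} \mathrm{sgn}(\sigma) \prod_{\un{i} \in J} a_{\sigma(\un{i}),\un{i}} = (-1)^n \sum_{J \in I_n} [A]_J$, which is the asserted formula.

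\textbf{Passage to a compact operator.} Since $U$ is completely continuous, with respect to the orthonormal basis $B_\infty(\k)$ only finitely many $\un{i}$ can have $\sup_{\un{j}} |u_{\un{i},\un{j}}|$ above any prescribed positive bound; consequently, for each fixed $n$ the series $\sum_{J \in I_n} [U]_J$ converges in $\CC_p$, because $|[U]_J| \le \max_{\sigma \in \Sym(J)} \prod_{\un{i} \in J} |u_{\sigma(\un{i}),\un{i}}|$ is small for all but finitely many $J$. Define $c_n(U)$ to be this limit. Writing $U(N)$ for the truncation to the first $N$ basis vectors, the finite case gives $\det(1-TU(N)) = \sum_n c_n(U(N)) T^n$, and for each fixed $n$ one has $c_n(U(N)) \to c_n(U)$ as $N \to \infty$, since the omitted subsets all contain an index $> N$ and so contribute terms tending to $0$. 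Serre's theory shows that $\det(1-TU) := \lim_N \det(1-TU(N))$ exists as an entire power series in $T$ independent of the truncating family, so comparing coefficients yields $\det(1-TU) = \sum_n c_n(U) T^n$ with $c_n(U)$ as stated.

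\textbf{Main obstacle.} The only ingredient beyond elementary linear algebra is the final assertion — that the Fredholm determinant of a compact $p$-adic operator is well-defined and coincides with the limit of the determinants of its finite truncations — and this is precisely the content of Serre's paper, which we cite rather than reprove. The combinatorial core, the passage from the Leibniz expansion to principal minors, is routine bookkeeping.
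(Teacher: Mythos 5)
Your argument is correct: the finite-rank identity follows from the Leibniz expansion exactly as you describe, and the convergence of each coefficient series together with the passage to the limit over truncations is precisely the analytic content you (rightly) attribute to Serre's theory of Fredholm determinants of completely continuous operators on $p$-adic Banach spaces. The paper proves this proposition simply by citing Serre's Proposition 7, and your write-up is in effect an outline of Serre's own proof of that proposition, so the two approaches are essentially the same.
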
 

\begin{proof}
	This is \cite[Proposition 7]{serro}.
\end{proof}

Now, we want to use Proposition \ref{prop8} to find the minors of $D(U_p^0)$ which will have valuation as small as possible.  Since $D(U_p^0)$ is a block diagonal matrix, its slopes are determined by the slopes of the blocks.

If $\{\lambda_i\}$ denotes the multiset of slopes appearing in $D(U_p^0)$, with $\lambda_i \leq \lambda_{i+1}$, then the Newton polygon of $D(U_p^0)$ is formed by the points $(n, \sum_{i=1}^n \lambda_i)$ and each $\lambda_i$ will come from one of the slopes appearing in $D_{a_{\lambda_i},b_{\lambda_i}}(U_p^0)$ for some index $(a_{\lambda_i},b_{\lambda_i}) \in B_{\infty}(\k)$. Moreover, the breakpoints of the Newton polygon occur when we have $\lambda_n < \lambda_{n+1}$. 

By Proposition \ref{prop8}, we see that if $a_{\lambda_i},b_{\lambda_i}$ have different parity, then \[\lambda_i \in \{\val_p(t_{a_{\lambda_i},b_{\lambda_i}}),\val_p(t_{a_{\lambda_i}+1,b_{\lambda_i}+1})\}.\] Otherwise, $D_{a_{\lambda_i},b_{\lambda_i}}(U_p^0)$ has the same slopes $\lambda_i$.

\begin{prop}\label{prop9}
	Let $\lambda_i=\lambda_j$ with $a_{\lambda_i},b_{\lambda_i}$ having different parity and $a_{\lambda_j},b_{\lambda_j}$ having same parity, then \[\val_p(t_{a_{\lambda_i},b_{\lambda_i}}) \leq \val_p(t_{a_{\lambda_j},b_{\lambda_j}}).\] 

\end{prop}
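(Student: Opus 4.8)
The plan is to reduce the inequality to a direct comparison of the explicit valuation formulas recorded in the proof of Proposition \ref{prop8}, using the structure of the ordering on $B_\infty(\k)$. Recall from that proof that for an index pair with $a \not\equiv b \bmod 2$ the relevant slope contributions are $\val_p(t_{a,b})=a+b+1$ and $\val_p(t_{a+1,b+1})=a+b+3$, while for $a\equiv b\equiv 0\bmod 2$ the unique slope of the $2\times 2$ block is $a+b+2$ (coming from the anti-diagonal entries $r_{a,b}, s_{a,b}$, whose valuations sum to $2(a+b+2)$). So the hypothesis $\lambda_i=\lambda_j$ with $(a_{\lambda_i},b_{\lambda_i})$ of different parity and $(a_{\lambda_j},b_{\lambda_j})$ of the same parity says: $\lambda_j = a_{\lambda_j}+b_{\lambda_j}+2$, and $\lambda_i$ equals either $a_{\lambda_i}+b_{\lambda_i}+1$ or $a_{\lambda_i}+b_{\lambda_i}+3$. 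In the first subcase the claimed inequality $\lambda_i = \val_p(t_{a_{\lambda_i},b_{\lambda_i}})$ is immediate since it is already the smallest slope in its block and equals $\lambda_j$; the content is the second subcase, where $\lambda_i = a_{\lambda_i}+b_{\lambda_i}+3 = a_{\lambda_j}+b_{\lambda_j}+2$, i.e. $S$-values differ by one, and we must show $\val_p(t_{a_{\lambda_i},b_{\lambda_i}}) = a_{\lambda_i}+b_{\lambda_i}+1 \le a_{\lambda_j}+b_{\lambda_j}+2 = \lambda_i$, which rearranges to $a_{\lambda_i}+b_{\lambda_i} \le a_{\lambda_j}+b_{\lambda_j}+1$. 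Since we are in the situation $a_{\lambda_i}+b_{\lambda_i}+3 = a_{\lambda_j}+b_{\lambda_j}+2$, we actually have $a_{\lambda_i}+b_{\lambda_i} = a_{\lambda_j}+b_{\lambda_j}-1$, so the inequality holds with room to spare.

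More conceptually, I would argue as follows. Write $m_i := a_{\lambda_i}+b_{\lambda_i}$ and $m_j := a_{\lambda_j}+b_{\lambda_j}$. The slopes produced by a block at an index of different parity are $m+1$ and $m+3$ (both odd-shifted), while a same-parity (necessarily both-even) block at index of total degree $m$ produces the single slope $m+2$. Setting these equal: if $\lambda_i = m_i+1$ then $m_i + 1 = m_j+2$, so $\val_p(t_{a_{\lambda_i},b_{\lambda_i}}) = m_i+1 = m_j+2 = \val_p(t_{a_{\lambda_j},b_{\lambda_j}})$ and equality holds; if $\lambda_i = m_i + 3$ then $m_i+3 = m_j+2$, so $m_i = m_j - 1 < m_j$, whence $\val_p(t_{a_{\lambda_i},b_{\lambda_i}}) = m_i+1 = m_j < m_j+2 = \val_p(t_{a_{\lambda_j},b_{\lambda_j}})$, a strict inequality. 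In both cases the desired inequality holds. The key point being exploited is simply that within a different-parity block the contribution we are allowed to pick for $\lambda_i$ — namely $\val_p(t_{a_{\lambda_i},b_{\lambda_i}})$ — is always the \emph{smaller} of the two slopes of that block, so it is bounded above by $\lambda_i$ itself, and then one just has to observe that $\val_p(t_{a_{\lambda_j},b_{\lambda_j}}) = \lambda_j = \lambda_i$ since the same-parity block is a scalar block.

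I expect the main (and only) subtlety to be bookkeeping: making sure the indices $(a_{\lambda_i},b_{\lambda_i})$ and $(a_{\lambda_j},b_{\lambda_j})$ really do range over the pairs described by Proposition \ref{prop8}, and that when $a_{\lambda_i}, b_{\lambda_i}$ have different parity the value $\val_p(t_{a_{\lambda_i},b_{\lambda_i}}) = a_{\lambda_i}+b_{\lambda_i}+1$ is indeed the minimal slope of $D_{a_{\lambda_i},b_{\lambda_i}}(U_p^0)$ (versus $a_{\lambda_i}+b_{\lambda_i}+3$ for the companion basis element $X^{a_{\lambda_i}+1}Y^{b_{\lambda_i}+1}$). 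Once this is pinned down, the inequality is a one-line comparison of integers. There is no analytic difficulty here; the statement is a combinatorial consequence of the explicit valuation table established in Proposition \ref{prop8}, and the proof should be only a few lines long.
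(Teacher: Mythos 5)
Your argument is correct and follows essentially the same route as the paper's proof: $\val_p(t_{a_{\lambda_i},b_{\lambda_i}})=a_{\lambda_i}+b_{\lambda_i}+1\le\lambda_i=\lambda_j\le\val_p(t_{a_{\lambda_j},b_{\lambda_j}})$, read off from the valuations in Proposition \ref{prop8}. One small imprecision: for an all-even block Proposition \ref{prop8} only gives $\val_p(t_{a_{\lambda_j},b_{\lambda_j}})=\lambda_j+\epsilon$ with $\epsilon\ge 0$, not the exact equality you assert (the block need not be ``scalar''), but since you only need the lower bound $\ge\lambda_j$ the conclusion is unaffected.
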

\begin{proof}
	First note that $\lambda_i=\val_p(t_{a_{\lambda_i},b_{\lambda_i}})= a_{\lambda_i}+b_{\lambda_i}+1$ or $\lambda_i=\val_p(t_{a_{\lambda_i}+1,b_{\lambda_i}+1})=a_{\lambda_i}+b_{\lambda_i}+3$. Now by Proposition \ref{prop8}, $\val_p(t_{a_{\lambda_j},b_{\lambda_j}})=\lambda_j + \epsilon$ with $\epsilon \geq 0$ from which the result follows.
\end{proof}

Let us now reorder the multiset of $\{\lambda_i\}$, so that if $\lambda_i=\lambda_j$ where $a_{\lambda_i},b_{\lambda_i}$ have different parity and $a_{\lambda_j},b_{\lambda_j}$ have same parity, then we relabel the slopes so that $i \leq j$. Similarly, we relabel the same slopes of $D_{a_{\lambda_i},b_{\lambda_i}}(U_p^0)$ by $\lambda_i$ and $\lambda_{i+1}$.

\begin{lem}\label{lem1}
For all $J \in I_n$ we have $\val_p([D(U_p^0)]_J) \geq \sum_{i=1}^n \lambda_i$. Moreover, if $\lambda_n < \lambda_{n+1}$ then there exists a unique $J$ such that $\val_p([D(U_p^0)]_J) = \sum_{i=1}^n \lambda_i$.
\end{lem}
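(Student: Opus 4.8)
The plan is to analyze the principal minors $[D(U_p^0)]_J$ using the block structure of $D(U_p^0)$. Since $D(U_p^0)$ is block-diagonal with $2\times 2$ blocks $D_{a,b}(U_p^0)$ indexed by $(a,b)\in I_\k$, any principal minor $[D(U_p^0)]_J$ factors as a product of contributions from the blocks it meets: if $J$ contains both basis vectors of a block, the contribution is $\det D_{a,b}(U_p^0)$; if it contains exactly one, the contribution is one of the two diagonal entries $t_{a,b}$ or $t_{a+1,b+1}$; if it contains neither, the block does not contribute. (If $J$ meets a block in exactly one index, the off-diagonal entries of that block are irrelevant to the minor, so only the diagonal entry matters.) Consequently $\val_p([D(U_p^0)]_J)$ is a sum, over blocks, of valuations of these local contributions.

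\medskip

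\textbf{Step 1: local lower bounds.} For each block $D_{a,b}(U_p^0)$, Proposition \ref{prop8} tells us the two slopes of the block, hence $\val_p(\det D_{a,b}(U_p^0))$ equals the sum of those two slopes, while each diagonal entry has valuation at least the smaller slope of the block (this is exactly the content of the valuation computations of $t_{a,b}$, $t_{a+1,b+1}$ in the proof of Proposition \ref{prop8}, together with Proposition \ref{prop9} in the same-parity case). So, whether $J$ meets a block in zero, one, or two indices, the contribution of that block to $\val_p([D(U_p^0)]_J)$ is bounded below by the sum of the $|J\cap\text{block}|$ smallest slopes available from that block's pair of slopes $\{\lambda,\lambda'\}$ (with the convention that picking one index costs at least $\min(\lambda,\lambda')$). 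Summing over all blocks and using the reordering of $\{\lambda_i\}$ fixed just before the statement — which is precisely designed so that, for any $n$, the $n$ globally smallest slopes $\lambda_1,\dots,\lambda_n$ are obtained by a greedy choice that is compatible with the per-block budget — gives $\val_p([D(U_p^0)]_J)\geq\sum_{i=1}^n\lambda_i$ for every $J\in I_n$. The one subtlety to check carefully is that the greedy global choice respects the block constraint ``you can take the second slope of a block only if you also take its first''; this is automatic because within a single block the two slopes satisfy $\lambda'=\lambda+2$ (different-parity case) or $\lambda'=\lambda$ (same-parity case), so the second one is never strictly smaller than the first, and our relabelling convention put the different-parity copy before the same-parity copy whenever two slopes coincide.

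\medskip

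\textbf{Step 2: equality and uniqueness when $\lambda_n<\lambda_{n+1}$.} When $\lambda_n<\lambda_{n+1}$, the set $\{\lambda_1,\dots,\lambda_n\}$ is ``closed'' — it consists of all slopes strictly below some threshold $\lambda_{n+1}$, plus possibly some but not all of the slopes equal to some value $c$; but since $\lambda_n<\lambda_{n+1}$ there is in fact no partial tie at the cut, so $\{\lambda_1,\dots,\lambda_n\}$ is exactly the multiset of all slopes $<\lambda_{n+1}$. I would then take $J$ to be the union, over every block both of whose slopes are $<\lambda_{n+1}$, of both its basis vectors, together with, for every block whose smaller slope is $<\lambda_{n+1}$ but whose larger slope is $\geq\lambda_{n+1}$, the single basis vector realizing the smaller slope — and in the different-parity case the valuation formulas show the smaller slope $a+b+1$ is realized by $t_{a,b}$ exactly, with no error term, so this block contributes exactly $a+b+1$. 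For this $J$ every block contributes exactly the minimum, so $\val_p([D(U_p^0)]_J)=\sum_{i=1}^n\lambda_i$. Uniqueness follows because any other $J'\in I_n$ either omits some slope $<\lambda_{n+1}$ (and then must include a slope $\geq\lambda_{n+1}$ to have size $n$, strictly raising the sum), or, if it contains the same multiset of slopes, it must realize a smaller slope of some different-parity block via $t_{a+1,b+1}$ rather than $t_{a,b}$ while still using block $D_{a,b}$ — but $t_{a+1,b+1}$ has valuation $a+b+3>a+b+1$, again strictly raising the total; and for same-parity blocks the two diagonal entries both have valuation $a+b+2+e$ with $e\geq 0$, so using such an entry in place of the anti-diagonal-determined slope is also never advantageous and, when $e>0$, strictly worse. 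A short case check rules out the remaining combinatorial possibilities.

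\medskip

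\textbf{Main obstacle.} The genuinely delicate point is the bookkeeping in Step 1–2 around ties: making rigorous the claim that the reordering of $\{\lambda_i\}$ chosen before the lemma makes the greedy/per-block optimization and the global optimization agree, and that the ``error terms'' $e$ in the same-parity diagonal entries never let an alternative $J$ sneak under the bound. Once the correct normal form for $J$ is identified (both-or-neither for fully-cheap blocks, the $t_{a,b}$-corner for boundary blocks), the valuation inequalities are all immediate consequences of Propositions \ref{prop8} and \ref{prop9}; the work is entirely in organizing the combinatorics of which indices to pick, not in any new estimate.
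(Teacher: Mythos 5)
Your proposal is correct and follows essentially the same route as the paper: exploit the block-diagonal structure so that each principal minor factors into per-block contributions, bound these below using the valuations of $t_{a,b}$, $t_{a+1,b+1}$ and the block determinants from Propositions \ref{prop8} and \ref{prop9}, and observe that at a breakpoint $\lambda_n<\lambda_{n+1}$ the minimizing choice of blocks and of the single index in the boundary (different-parity) block is forced. Your write-up is in fact more explicit than the paper's about the exchange argument and the uniqueness case analysis, but there is no difference in method.
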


\begin{proof}
	Let $D:=D(U_p^0)$. Note that from the definition of a Newton polygon, we have \[\sum_{i=1}^n \lambda_i \leq \val_p(C_n(D))= \val_p \left ( \sum_{J \in I_n} [D]_J \right ).\] Now, let $J_n=\{(a_{\lambda_i},b_{\lambda_i} )\}_{i=1}^n$, then by Propositions \ref{prop8} and \ref{prop9}, it follows that $\val_p([D]_{J_n}) \leq \val_p([D]_J)$ for all $J \in I_n$. In fact, if $a_{\lambda_n},b_{\lambda_n}$ have different parity, then $\val_p([D(U_p^0)]_{J_n}) = \sum_{i=1}^n \lambda_i$. 
	
	Now, note that if we have $\lambda_n=\lambda_{n+1}$ with $a_{\lambda_n},b_{\lambda_n}$ having different parity then $J_{n-1} \cup \{a_{\lambda_n},b_{\lambda_n}\}$ and $J_{n-1} \cup \{a_{\lambda_{n+1}},b_{\lambda_{n+1}} \}$ give rise to principal minors with the same $p$-adic valuation. In general, one sees that the index set $J$ such that $[U]_J$ is minimal are constructed by rearranging the $\lambda_i,\lambda_j$ (and thus the corresponding indexes) with $\lambda_i=\lambda_j$. From this it is then clear that if $\lambda_n < \lambda_{n+1}$, then $J_n$ is the unique subset in $I_n$ for which $\val_p([D(U_p^0)]_{J_n}) = \sum_{i=1}^n \lambda_i$, which finishes the proof.\qedhere

\end{proof}
\begin{cor}\label{kcor}
	For each $n$ the minimal valuation of a $n \times n$ principal minor of $D(U_p^0)$ lies on or above $\NP(D(U_p^0))$. Furthermore, if\/ $(m,\val_p(c_{m}(D(U_p^0)))$ is a vertex, then there is a unique minor $[D(U_p^0)]_J$ with $J \in I_m$ such that \[\val_p([D(U_p^0)]_J)=\val_p(c_{m}(D(U_p^0)).\] 
\end{cor}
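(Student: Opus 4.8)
The plan is to deduce Corollary \ref{kcor} directly from Lemma \ref{lem1}, which already does all the heavy lifting. The first statement is essentially a restatement of the first sentence of Lemma \ref{lem1}: for each $n$, every $n\times n$ principal minor $[D(U_p^0)]_J$ satisfies $\val_p([D(U_p^0)]_J)\ge \sum_{i=1}^n\lambda_i$, and by Proposition \ref{serr} the point $(n,\val_p(c_n(D(U_p^0))))$ has second coordinate at least $\min_{J\in I_n}\val_p([D(U_p^0)]_J)\ge\sum_{i=1}^n\lambda_i$. Since $\sum_{i=1}^n\lambda_i$ is by construction the value of $\NP(D(U_p^0))$ at abscissa $n$, this says precisely that each minimal $n\times n$ principal minor valuation lies on or above the Newton polygon. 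So the first part is immediate once one spells out the dictionary between $c_n$ and principal minors via Serre's formula.

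For the second statement, suppose $(m,\val_p(c_m(D(U_p^0))))$ is a vertex of the Newton polygon. Being a vertex means in particular $\lambda_m<\lambda_{m+1}$ (the slope strictly increases at $m$), and also that the value of $\NP$ at $m$ is exactly $\sum_{i=1}^m\lambda_i$ with equality $\val_p(c_m(D(U_p^0)))=\sum_{i=1}^m\lambda_i$ — this is where vertex-ness, as opposed to merely lying on the polygon, is used: a vertex of the Newton polygon is by definition a point where the polygon equals the valuation of the corresponding coefficient. Now apply the "moreover" part of Lemma \ref{lem1}: since $\lambda_m<\lambda_{m+1}$, there is a unique $J\in I_m$ with $\val_p([D(U_p^0)]_J)=\sum_{i=1}^m\lambda_i$, namely $J=J_m=\{(a_{\lambda_i},b_{\lambda_i})\}_{i=1}^m$. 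All other $m\times m$ principal minors have strictly larger valuation, so in the sum $c_m(D(U_p^0))=(-1)^m\sum_{J\in I_m}[D(U_p^0)]_J$ the term indexed by $J_m$ strictly dominates; hence $\val_p(c_m(D(U_p^0)))=\val_p([D(U_p^0)]_{J_m})=\sum_{i=1}^m\lambda_i$, consistent with the vertex hypothesis, and $J_m$ is the unique minor achieving this valuation.

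I would phrase the proof in roughly two sentences: (i) cite Lemma \ref{lem1} and Proposition \ref{serr} for the lower bound, identifying $\sum_{i=1}^n\lambda_i$ as the ordinate of $\NP(D(U_p^0))$ at $n$; (ii) observe that a vertex forces $\lambda_m<\lambda_{m+1}$, invoke the uniqueness clause of Lemma \ref{lem1}, and note that the unique minimal-valuation term cannot cancel in Serre's sum, giving both the equality $\val_p(c_m)=\val_p([D]_{J_m})$ and the uniqueness of the witnessing minor. The only mild subtlety — and the step I'd be most careful about — is the logical direction: one must make sure that "$(m,\val_p(c_m))$ is a vertex" genuinely implies $\lambda_m<\lambda_{m+1}$ rather than just $\le$, and that one is not secretly assuming what is to be proved; but since the $\lambda_i$ are precisely the slopes of $D(U_p^0)$ by construction and the first part already shows $\val_p(c_m)\ge\sum_{i=1}^m\lambda_i$ with the reverse inequality holding at the vertices of $\NP$ by definition of the Newton polygon as a lower convex hull, this causes no circularity. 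No genuinely hard computation is involved; it is a matter of assembling Lemma \ref{lem1}, Proposition \ref{serr}, and the definition of a Newton polygon vertex.
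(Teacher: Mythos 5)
Your argument is correct and matches the paper's (implicit) proof: the corollary is stated there without a separate proof precisely because, as you spell out, the first claim is the inequality of Lemma \ref{lem1} read against the description of $\NP(D(U_p^0))$ by the points $(n,\sum_{i=1}^n\lambda_i)$, and the second follows from the uniqueness clause of Lemma \ref{lem1} (a vertex at $m$ meaning $\lambda_m<\lambda_{m+1}$) together with Serre's formula, since the unique minimal-valuation minor cannot cancel. Your care about the vertex condition forcing strict inequality $\lambda_m<\lambda_{m+1}$ is exactly the right point to check, and there is no circularity.
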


\begin{nota}
Let $\JJ_{n}$  denote the set of $J \in I_{n}$ such that $\val_p([D(U_p^0)]_J$ is minimal.
\end{nota}
 For $J \in \JJ_n$ we want to select  $\s_J \in \Sym(J)$ which will pick out the entries with smallest valuation. From Proposition \ref{prop8}, its clear that to do this we must take the diagonal entries of any block $D_{a,b}(U_p^0)$ with $a \not \equiv b \bmod 2$ and the anti-diagonal entries if $a \equiv b \equiv 0 \bmod 2$. The only issue is that if the last index, $(a,b)$ say,  in $J$ is such that $a \equiv b \equiv 0 \bmod 2$, then we cannot pick the anti-diagonal entries, in this case one must again take the diagonal entry.

With this in mind, we make the following definition:
\begin{defn}
 For $J \in \JJ_n$, we take $\s_J \in \Sym(J)$  such that  if $i_1 \not \equiv i_2 \bmod 2$ then $\s_J(\un{i})=\un{i}$,  $\s_J(\un{j})=\un{j}$  and otherwise we take  $\s_J(\un{i})=\un{j}$ where   $j_1=i_1+1, j_2=i_2+1$, except possibly for the final index in $J$. Note that Proposition \ref{prop8} and Lemma \ref{lem1} ensure that $\s_J$ is chosen so as to pick out the entries lying on $D(U_p^0)$ with smallest valuation. 

\end{defn}

With this we can now prove the lemma.

\begin{proof}[Proof of Lemma \ref{key lemma}]
	By Proposition \ref{prop8} it is clear that if $A,B \in \mathfrak{M}_\k$ then $\NP(D(A))=\NP(D(B)).$  So it is enough to check that for any $A=(a_{\un{i},\un{j}}) \in \mathfrak{M}_\k$ we have $\NP(A)=\NP(D(A))$.

	Now, by Propositions \ref{serr} we have  \begin{equation}
	\val_p(c_n(A)) \ge \min_{ \substack{J \subset I_n \\ \s \in \Sym(J)}}\left \{  \sum_{\un{i} \in J} \val_p(a_{\s(\un{i}),\un{i}}) \right \}
	\end{equation}

and, for fixed $J,\s$, we have 
		
\begin{equation}
 \sum_{\un{i} \in J} \val_p(a_{\s(\un{i}),\un{i}}) =  S(J)+ \left( \sum_{\un{i} \in J} \val_p(a'_{\s(\un{i}),\un{i}}) \right )
\end{equation} where $a'_{\un{i},\un{j}}=2^{-i_1-i_2}a_{\un{i},\un{j}}$ which is in $\OO_K$ by Corollary \ref{cor6}. 
	
	Now, note that:
	
	\begin{enumerate}
		\item 	Since our basis is given by elements $X^{i_1}Y^{i_2}$ with $n_2-n_1 \equiv i_1-i_2 \bmod 3$, it follows that, if $a_{\un{i},\un{j}}$ is an entry of $A$,   then $|i_1-j_1|+|i_2-j_2| \ge 2$, with equality if and only if $a_{\un{i},\un{j}}$ lies in $D(A)$ (this is due to how we have ordered our basis). Furthermore, by Corollary \ref{cor6} it is easy to check that  if $|\s(\un{i})_1-i_1|+|\s(\un{i})_2-i_2| > 2$ then  $$\val_p(a'_{\s(\un{i}),\un{i}}a'_{\un{i},\s(\un{i})}) > 2.$$ In other words, if the entries are `far' from the block diagonal then their product has large valuation.
		
		\item If $i,j \in J$ with $j_1=i_1+1,j_2=i_2+1$ then by Proposition \ref{prop8}, there exist $\s$ such that   $$\val_p(a'_{\s(\un{i}),\un{i}}a'_{\un{i},\s(\un{i})}) = 2.$$ Explicitly, we take $\s$ such that $a'_{\s(\un{i}),\un{i}},a'_{\un{i},\s(\un{i})}$ are both on the diagonal or anti-diagonal of $D_{i_1,i_2}(A)$ (depending on the parity of $(i_1,i_2)$).
		
		\item  If $\un{i} \in J$ with $i_1 \not \equiv i_2 \bmod 2$ then $\val_p(a'_{\un{i},\un{i}})=1$ and otherwise  $\val_p(a'_{\un{i},\un{i}}) \ge 2$.
	\end{enumerate}

 From the above it follows that the $J \in I_n$ and $\s \in \Sym(J)$ for which$$\sum_{\un{i} \in J} \val_p(a_{\s(\un{i}),\un{i}})$$ is minimal are those with $J \in \JJ_n$ and $\s=\s_J$. This also shows that the minimal $p$-adic valuation of the $n \times n$ principal minors of $A$ and $D(A)$ coincide. Now, note that there could be several such $J$ with $\val_p([A]_J)$ being minimal, so we only know that $(n,\val_{p}(c_n(A)))$ lies on or above $\NP(D(A))$ (this follows from Corollary \ref{kcor}). To get the result, we note that by Corollary \ref{kcor} if $n$ corresponds to a vertex, then there is a unique $J$ such that the minor $[D(A)]_J$ has minimal valuation and the above shows that the same is true for $A$, therefore $(n, \val_p(c_{n}(A)))$ is a vertex of $\NP(A)$.\qedhere

\end{proof}

\section{Scaling of slopes}
We now have the following computational evidence which shows that as our weights approach the boundary of weight space, the slopes scale.

Let $\chi=\chi_{2}$ denote our fixed finite character as in the previous section. Then  for $n \geq 3$, let $\chi_{n}$ be a character of conductor $p^n$, such that $\chi_{n}(e)=-1$ and $ \chi(-1)=1$ for $e$ the fundamental unit in $\OO_F$. Similarly, for $n \geq 3$ we let $\psi_{n}$ denote characters where $\psi_{n}(e)=1$ and $\psi(-1)=1$.

\vspace{5mm}
\begin{tabu}{| [2 pt] c | [1 pt]c |  [1 pt]X |  [2 pt]}
	\Xhline{2 pt}
	
	Weight & level &  [Slope, Multiplicity]  \\ \Xhline{2 pt}
	$[3,3]\chi_2$ & $U_0(4)$ & $[2,2]$ 
		\\ \hline
		$[3,3]\chi_3$ &  $U_0(8)$ & $[ 1, 2 ],[ 2, 2 ],[ 3, 2 ]$
		\\ \hline
			$[3,3]\chi_4$ & $U_0(16)$ &   $ [ 1/2, 2 ],
		[ 1, 2 ],
		[ 3/2, 4 ],
		[ 2, 6 ],
		[ 5/2, 4 ],
		[ 3, 2 ],
		[ 7/2, 2 ]$	
		\\ \hline	
			$[3,3]\chi_5$ & $U_0(32)$ & 
		$ [ 1/4, 2 ],
		[ 1/2, 2 ],
		[ 3/4, 4 ],
		[ 1, 6 ],
		[ 5/4, 6 ],
		[ 3/2, 8 ],
		[ 7/4, 10 ],
		[ 2, 10 ],
		[ 9/4, 10 ],
		[ 5/2, 8 ],
		[ 11/4, 6 ],\allowbreak
		[ 3, 6 ],
		[ 13/4, 4 ],
		[ 7/2, 2 ],
		[ 15/4, 2 ]	$
		\\ \Xhline{2 pt} 	  
	$[ 2, 2 ]\psi_3$& $U_0(8)$ &
$[ 1, 2 ]$
\\ \hline	
	$[ 2, 2 ]\psi_4$& $U_0(16)$ &
$[ 1/2, 2 ],
[ 1, 2 ],
[ 3/2, 2 ]$
\\ \hline	
	$[2,2]\psi_5$ &    $U_0(32)$ &        $[ 1/4, 2 ],
[ 1/2, 2 ],
[ 3/4, 4 ],
[ 1, 6 ],
[ 5/4, 4 ],
[ 3/2, 2 ],
[ 7/4, 2 ]$
			\\ \Xhline{2 pt} 	
			
	$[3,5]\chi_2$ & $U_0(4)$ & 	$[ 2, 1 ],
	[ 4, 1 ]$
\\ \hline
$[3,5]\chi_3$ &  $U_0(8)$ & 	$[ 1, 1 ],
[ 2, 3 ],
[ 3, 2 ],
[ 4, 3 ],
[ 5, 1 ]$
\\ \hline
$[3,5]\chi_4$ & $U_0(16)$ &   	$[ 1/2, 1 ],
[ 1, 3 ],\allowbreak
[ 3/2, 4 ],\allowbreak
[ 2, 5 ],\allowbreak
[ 5/2, 5 ],\allowbreak
[ 3, 6 ],\allowbreak
[ 7/2, 5 ],\allowbreak
[ 4, 5 ],
[ 9/2, 4 ],\allowbreak
[ 5, 3 ],\allowbreak
[ 11/2, 1 ]$
\\ \hline	
$[3,5]\chi_5$ & $U_0(32)$ &   $ [ 1/4, 1 ],
[ 1/2, 3 ],\allowbreak
[ 3/4, 4 ],\allowbreak
[ 1, 5 ],\allowbreak
[ 5/4, 7 ],\allowbreak
[ 3/2, 8 ],\allowbreak
[ 7/4, 9 ],\allowbreak
[ 2, 11 ],\allowbreak
[ 9/4, 10 ],\allowbreak
[ 5/2, 11 ],\allowbreak
[ 11/4, 11 ],\allowbreak
[ 3, 10 ],\allowbreak
[ 13/4, 11 ],\allowbreak
[ 7/2, 11 ],\allowbreak
[ 15/4, 10 ],\allowbreak
[ 4, 11 ],\allowbreak
[ 17/4, 9 ],\allowbreak
[ 9/2, 8 ],\allowbreak
[ 19/4, 7 ],\allowbreak
[5, 5],\allowbreak
[ 21/4, 4 ],\allowbreak
[ 11/2, 3 ],\allowbreak
[ 23/4, 1 ]$
\\ \Xhline{2 pt} 	
	$[ 2, 4 ]\psi_3$& $U_0(8)$ &
  $[ 1, 1 ],
[ 2, 2 ],
[ 3, 1 ]$
\\ \hline	
$[ 2, 4 ]\psi_4$& $U_0(16)$ &
  $[ 1/2, 1 ],
[ 1, 3 ],\allowbreak
[ 3/2, 3 ],\allowbreak
[ 2, 2 ],\allowbreak
[ 5/2, 3 ],\allowbreak
[ 3, 3 ],
[ 7/2, 1 ]$
\\ \hline	
$[2,4]\psi_5$ &    $U_0(32)$ &        $[ 1/4, 1 ],
[ 1/2, 3 ],\allowbreak
[ 3/4, 4 ],\allowbreak
[ 1, 5 ],\allowbreak
[ 5/4, 6 ],\allowbreak
[ 3/2, 5 ],\allowbreak
[ 7/4, 5 ],\allowbreak
[ 2, 6 ],\allowbreak
[ 9/4, 5 ],\allowbreak
[ 5/2, 5 ],\allowbreak
[ 11/4, 6 ],\allowbreak
[ 3, 5 ],\allowbreak
[ 13/4, 4 ],\allowbreak
[ 7/2, 3 ],\allowbreak
[ 15/4, 1 ]$
\\ \Xhline{2 pt}  			  	
\end{tabu}
\vspace{5mm}
Unfortunately proving the scaling behaviour seems to  be beyond our current methods, but the above tables suggest that as one approaches the boundary of weight space the slopes scale in a way analogous to what occurs for elliptic modular forms.

 \bibliographystyle{alpha}
 
 \bibliography{biblio(3)}

\end{document}